\documentclass{article}

\usepackage[english]{babel}\usepackage[latin1]{inputenc}\usepackage[T1]{fontenc}
\usepackage{mathdots,graphicx,amsfonts,amssymb,amsmath,amsthm,mathrsfs,epstopdf,enumitem,multirow,booktabs,subcaption,color,mathtools}
\usepackage[top=2cm,bottom=3cm,left=2cm,right=2cm]{geometry}
\usepackage{sectsty}\sectionfont{\normalsize}\subsectionfont{\normalsize}

\newtheorem{theorem}{Theorem}[section]
\newtheorem{lemma}{Lemma}[section]
\theoremstyle{definition}
\newtheorem{definition}{Definition}[section]
\newtheorem{remark}{Remark}[section]
\newtheorem{example}{Example}[section]

\numberwithin{equation}{section}
\numberwithin{figure}{section}
\numberwithin{table}{section}

\renewcommand{\epsilon}{\varepsilon}
\newcommand{\vv}{{\boldsymbol v}}
\newcommand{\xx}{{\boldsymbol x}}

\begin{document}

\title{Block Jacobi/Gauss--Seidel preconditioning for GLT~sequences, and~GLH~sequences}
\author{Carlo Garoni\\
\footnotesize Department of Mathematics, University of Rome Tor Vergata, Rome, Italy (garoni@mat.uniroma2.it)\\[10pt]
Stefano Serra-Capizzano\thanks{Corresponding author}\\
\footnotesize Department of Science and High Technology, University of Insubria, Como, Italy (s.serracapizzano@uninsubria.it)\\[-2pt]
\footnotesize Department of Information Technology, Uppsala University, Uppsala, Sweden (stefano.serra@it.uu.se)}
\date{}

\maketitle

\begin{abstract}
The theory of generalized locally Toeplitz (GLT) sequences is an apparatus for computing the spectral and singular value distribution of sequences of matrices that possess a (possibly hidden) Toeplitz-like structure. These sequences, which are known as GLT sequences, arise in several applications, including the discretization of differential equations.
Associated with any GLT sequence is a special function called symbol. In this paper, we prove that, if $\{A_n\}_n$ is a GLT sequence with symbol $\kappa$ and $P_n$ is any block Jacobi or block Gauss--Seidel preconditioner for $A_n$ with a fixed number of blocks independent of $n$, then $\{P_n\}_n$ is a GLT sequence with symbol $\kappa$, just like $\{A_n\}_n$.
This result allows us to predict a remarkable efficiency of block Jacobi/Gauss--Seidel preconditioning for GLT sequences, which is in fact illustrated through numerical experiments.
It also allows us to extend the Fasino--Tilli theorem on the zero distribution of Hankel matrix sequences generated by $L^1$ functions to a larger class of matrix sequences called generalized locally Hankel (GLH) sequences.

\smallskip

\noindent{\em Keywords:} generalized locally Toeplitz sequences, block Jacobi preconditioning, block Gauss--Seidel preconditioning, generalized locally Hankel sequences, spectral and singular value distribution

\smallskip

\noindent{\em 2010 MSC:} 15B05, 47B35, 65F08, 15A18, 47B06
\end{abstract}


\section{Introduction}


Throughout this paper, a sequence of matrices is, by definition, a sequence of the form $\{A_n\}_n$, where $n$ varies in some infinite subset of $\mathbb N$ and $A_n$ is a matrix of size $d_n\times e_n$ such that both $d_n$ and $e_n$ tend to $\infty$ as $n\to\infty$.
The theory of generalized locally Toeplitz (GLT) sequences is an apparatus for computing the spectral and singular value distribution of sequences of matrices that possess a (possibly hidden) Toeplitz-like structure. Sequences of this kind, which are known as GLT sequences, arise in several applications, including the discretization of differential and integral equations.
Nowadays, the theory of GLT sequences is a fairly extensive research area with numerous applications.
For readers who are new to the subject, we recommend the introduction \cite{GLT-intro} and the six-page conference paper \cite{aip}.
For a comprehensive exposition of the topic, we refer to the books \cite{GLTbookI,GLTbookII} and the book-like papers \cite{rg,bgR,bg,bgd}.
Recent noteworthy developments not published in the previous references include the identification between spaces of GLT sequences and function spaces \cite{b2017,krs2022}, the characterization of diagonal matrix sequences enjoying a spectral distribution in terms of GLT sequences \cite{b2019}, the spectral distribution result for non-Hermitian perturbations of Hermitian GLT sequences \cite{b2020}, the GLT classification of diagonal sampling matrix sequences obtained from quasi-uniform samples of almost everywhere continuous functions \cite{griglie_a.u.}, the derivation of a ``normal form'' for GLT sequences \cite{normal_form}, and the successful application of matrix-less spectral approximation methods to GLT sequences \cite{bit2025}.
It is also worth noting that, as demonstrated by recent research, the spectral distribution of a GLT sequence has significant practical implications. For example, suppose that $\{A_n\}_n$ is a GLT sequence resulting from the discretization of a differential equation $\mathscr Au=f$ through a given numerical method. Then, the spectral distribution of $\{A_n\}_n$ can be used to measure the accuracy of the method in approximating the spectrum of the differential operator $\mathscr A$ \cite{DavideCalcolo2021}, to establish whether the method preserves the so-called average spectral gap \cite{DavideCalcolo2018}, or to formulate analytical predictions for the eigenvalues of both $A_n$ and $\mathscr A$ \cite{Tom-paper}. Moreover, the spectral distribution of $\{A_n\}_n$ can be exploited to design efficient iterative solvers for linear systems with matrix $A_n$ and to analyze/predict their performance; see \cite{BeKu,Kuij-SIREV} for accurate convergence estimates of Krylov methods based on the spectral distribution and \cite[p.~3]{GLTbookI} for more details on this subject.

Associated with any GLT sequence $\{A_n\}_n$ is a special function $\kappa$, the so-called symbol.
Given a GLT sequence $\{A_n\}_n$ with symbol $\kappa$ and a vector $(n_1,\ldots,n_\nu)$ of fixed length $\nu$ such that $n_1+\ldots+n_\nu={\rm size}(A_n)$, we consider two sequences of preconditioners for $A_n$:
\begin{itemize}[nolistsep,leftmargin=*]
	\item the sequence of block Jacobi preconditioners for $A_n$ associated with $(n_1,\ldots,n_\nu)$, i.e., the sequence of matrices
	\[ P_n^{J}=\mathop{\rm blockdiag}_{n_1,\ldots,n_\nu}(A_n)=\begin{bmatrix}A_n^{(11)} & & \\ & \ddots & \\ & & A_n^{(\nu\nu)}\end{bmatrix}, \]
	where $A_n^{(kk)}$ is the $k$th diagonal block of $A_n$ of size $n_k$;
	\item the sequence of block Gauss--Seidel preconditioners for $A_n$ associated with $(n_1,\ldots,n_\nu)$, i.e., the sequence of matrices
	\[ P_n^{GS}=\mathop{\rm blocktril}_{n_1,\ldots,n_\nu}(A_n)=\begin{bmatrix}A_n^{(11)} & & \\ \vdots & \ddots & \\ A_n^{(\nu1)} & \cdots & A_n^{(\nu\nu)}\end{bmatrix}, \]
	where $A_n^{(pq)}$ is the block of $A_n$ of size $n_p\times n_q$ in position $(p,q)$.
\end{itemize}
	In our main results (Theorems~\ref{blockdiag(GLT)=GLT}--\ref{blocktril(GLT)=GLT}), we prove that, regardless of $(n_1,\ldots,n_\nu)$, $\{P_n^J\}_n$ and $\{P_n^{GS}\}_n$ are GLT sequences with symbol $\kappa$, just like $\{A_n\}_n$. This result has two noteworthy implications.
\begin{itemize}[nolistsep,leftmargin=*]
	\item First, it gives reasons to believe that block Jacobi/Gauss--Seidel preconditioning for matrices $A_n$ belonging to a GLT sequence is very efficient; see Remark~\ref{bJp-efficiency}.
	The numerical experiments presented in this paper reveal that this is in fact the case.
	\item Secondly, it allows us to give an alternative proof of the Fasino--Tilli theorem on the zero distribution of sequences of Hankel matrices generated by $L^1$ functions (Theorem~\ref{thm:FT}). Actually, it also allows us to extend the Fasino--Tilli theorem to a larger class of matrix sequences that we call generalized locally Hankel (GLH) sequences; see Theorem~\ref{thm:FTextended}.
\end{itemize}

The paper is organized as follows.
In Section~\ref{sec:overview}, we give an overview of the theory of GLT sequences with a focus on the results that we need in this paper.
In Section~\ref{sec:GLTp}, we extend to the case of GLT sequences with matrix-valued symbols a well-known preconditioning result, which will be used in combination with our first main result (Theorem~\ref{blockdiag(GLT)=GLT}) to theoretically justify the efficiency of block Jacobi preconditioners for GLT sequences.
In Section~\ref{sec:bJGSp-GLT}, we state and prove our main results described above (Theorems~\ref{blockdiag(GLT)=GLT}--\ref{blocktril(GLT)=GLT}).
In Section~\ref{sec:GLH}, as an application of Theorem~\ref{blockdiag(GLT)=GLT}, we give an alternative proof of the Fasino--Tilli theorem on the zero distribution of sequences of Hankel matrices generated by $L^1$ functions, and we also extend this theorem to GLH sequences.
In Section~\ref{sec:ne}, we illustrate through numerical examples the efficiency of block Jacobi and block Gauss--Seidel preconditioners for GLT sequences.
In Section~\ref{sec:conc}, we draw conclusions and suggest future lines of research, which include, among others, the application of our main results to the analysis of additive and multiplicative Schwarz methods in the context of domain decomposition techniques.

\section{Overview of the theory of GLT sequences}\label{sec:overview}

In this section, we give an overview of the theory of GLT sequences.
For conciseness purposes, we only present the results that we need in this paper.
For a comprehensive exposition of the topic, see \cite{rg,bgR,bg,bgd,GLTbookI,GLTbookII}.
For an introduction to the subject, we recommend \cite{GLT-intro,aip}.

\bigskip

\noindent\textbf{Singular value and spectral distribution of a sequence of matrices.}
Let $\mu_k$ be the Lebesgue measure in $\mathbb R^k$. Throughout this paper, all terminology from measure theory (such as ``measurable set'', ``measurable function'', ``a.e.'', etc.)\ always refers to the Lebesgue measure. A matrix-valued function $f:D\subseteq\mathbb R^k\to\mathbb C^{s\times t}$ is said to be measurable (respectively, continuous, continuous a.e., in $L^1(D)$, etc.)\ if its components $f_{ij}:D\to\mathbb C$, $i=1,\ldots,s$, $j=1,\ldots,t$, are measurable (respectively, continuous, continuous a.e., in $L^1(D)$, etc.).
For every $x,y\in\mathbb R$, we define $x\wedge y=\min(x,y)$.
We denote by $C_c(\mathbb C)$ (respectively, $C_c(\mathbb R)$) the space of complex-valued functions defined on $\mathbb C$ (respectively, $\mathbb R$) with bounded support.
The singular values of a matrix $A\in\mathbb C^{m\times n}$ are denoted by $\sigma_i(A)$, $i=1,\ldots,m\wedge n$, and the eigenvalues of a matrix $A\in\mathbb C^{m\times m}$ are denoted by $\lambda_i(A)$, $i=1,\ldots,m$. The spectrum of a matrix $A\in\mathbb C^{m\times m}$ is denoted by $\Lambda(A)$. 
Recall that a sequence of matrices is, by definition, a sequence of the form $\{A_n\}_n$, where $n$ varies in some infinite subset of $\mathbb N$ and $A_n$ is a matrix of size $d_n\times e_n$ such that both $d_n$ and $e_n$ tend to $\infty$ as $n\to\infty$.

\begin{definition}[\textbf{singular value and spectral distribution of a sequence of matrices}]\label{def-distribution}\hfill
\begin{itemize}[leftmargin=*,nolistsep]
	\item Let $\{A_n\}_n$ be a sequence of matrices with $A_n$ of size $d_n\times e_n$, and let $f:D\subset\mathbb R^k\to\mathbb C^{s\times t}$ be measurable with $0<\mu_k(D)<\infty$. We say that $\{A_n\}_n$ has a singular value distribution described by $f$, and we write $\{A_n\}_n\sim_\sigma f$, if
	\begin{equation*}
	\lim_{n\rightarrow\infty}\frac1{d_n\wedge e_n}\sum_{i=1}^{d_n\wedge e_n}F(\sigma_i(A_n))=\frac1{\mu_k(D)}\int_D\frac{\sum_{i=1}^{s\wedge t}F(\sigma_i(f(\xx)))}{s\wedge t}{{\rm d}}\xx,\qquad\forall\,F\in C_c(\mathbb R).
	\end{equation*}
	\item Let $\{A_n\}_n$ be a sequence of square matrices with $A_n$ of size $d_n$, and let $f:D\subset\mathbb R^k\to\mathbb C^{s\times s}$ be measurable with $0<\mu_k(D)<\infty$. We say that $\{A_n\}_n$ has a spectral (or eigenvalue) distribution described by $f$, and we write $\{A_n\}_n\sim_\lambda f$, if
	\begin{equation*}
	\lim_{n\rightarrow\infty}\frac1{d_n}\sum_{i=1}^{d_n}F(\lambda_i(A_n))=\frac1{\mu_k(D)}\int_D\frac{\sum_{i=1}^{s}F(\lambda_i(f(\xx)))}{s}{{\rm d}}\xx,\qquad\forall\,F\in C_c(\mathbb C).
	\end{equation*}
\end{itemize}
\end{definition}

Note that Definition~\ref{def-distribution} is well-posed by \cite[Lemma~2.1]{bg}, which ensures that the functions $\xx\mapsto\sum_{i=1}^{s\wedge t}F(\sigma_i(f(\xx)))$ and $\xx\mapsto\sum_{i=1}^{s}F(\lambda_i(f(\xx)))$ are measurable. 
We refer the reader to \cite[Remark~2.9]{bg} or \cite[Remarks~4.1--4.2]{blocking} for the informal meaning behind the singular value and spectral distribution of a sequence of matrices.

The following lemma will be used in Section~\ref{sec:GLTp}; see \cite[Lemma~2.3 and Corollary~2.15]{bg} for the corresponding proof.
Throughout this paper, the closure of a set $S$ is denoted by $\overline S$.

\begin{lemma}\label{S1-block}
Let $\{A_n\}_n$ be a sequence of square matrices and let $f:D\subset\mathbb R^k\to\mathbb C^{s\times s}$ be measurable with $0<\mu_k(D)<\infty$. If $\{A_n\}_n\sim_\lambda f$ and $\Lambda(A_n)$ is contained in $S\subseteq\mathbb C$ for all $n$, then $\Lambda(f)\subseteq\overline S$ a.e.
\end{lemma}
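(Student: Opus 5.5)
We argue by contradiction, exploiting the freedom in the test functions $F\in C_c(\mathbb C)$ in Definition~\ref{def-distribution}. Define the exceptional set
\[ E=\{\xx\in D:\ \Lambda(f(\xx))\not\subseteq\overline S\}=\{\xx\in D:\ \exists\,i\in\{1,\ldots,s\}\ \text{with}\ \lambda_i(f(\xx))\notin\overline S\}, \]
and suppose $\mu_k(E)>0$. The open set $\mathbb C\setminus\overline S$ is a countable union of closed discs $K_m=\overline{B(z_m,r_m)}$ with rational centers and radii, each contained in $\mathbb C\setminus\overline S$. Hence $E=\bigcup_m E_m$, where $E_m=\{\xx\in D:\ \Lambda(f(\xx))\cap K_m\neq\emptyset\}$. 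Each $E_m$ is measurable: taking continuous $F_j\in C_c(\mathbb C)$ with $0\le F_j\le1$ and $F_j\downarrow\chi_{K_m}$ (for instance, $F_j$ supported in $B(z_m,r_m+1/j)$), we can write $E_m$ as a countable intersection of sets $\{\sum_iF_j(\lambda_i(f(\xx)))>0\}$, which are measurable by \cite[Lemma~2.1]{bg}. Since $\mu_k(E)>0$, some $E_m$ has positive measure. Fix such an $m$.

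We shrink the distance slightly to get a positive margin. Because $K_m$ is compact and $\overline S$ is closed and disjoint from it, the number $\delta=\mathrm{dist}(K_m,\overline S)$ is positive. Choose $F\in C_c(\mathbb C)$ with $0\le F\le1$, $F=1$ on $K_m$, and $F=0$ outside the $\delta/2$-neighbourhood of $K_m$. Then $F$ vanishes on $\overline S$, and in particular on $\Lambda(A_n)$ for every $n$. The left-hand side of the spectral distribution relation is therefore
\[ \frac1{d_n}\sum_{i=1}^{d_n}F(\lambda_i(A_n))=0\qquad\text{for all }n, \]
so its limit is $0$.

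On the right-hand side, the integrand $\sum_{i=1}^sF(\lambda_i(f(\xx)))/s$ is nonnegative everywhere. On $E_m$ it is at least $1/s$, because some eigenvalue lies in $K_m$, where $F=1$. Consequently the right-hand side is at least $\mu_k(E_m)/(s\,\mu_k(D))>0$. This contradicts $\{A_n\}_n\sim_\lambda f$, so $\mu_k(E)=0$, which is exactly the claim $\Lambda(f)\subseteq\overline S$ a.e.

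The main obstacle is the measurability and countable-reduction step, since $E$ itself is defined through eigenvalues, which are not individually well-ordered measurable functions. Covering $\mathbb C\setminus\overline S$ by countably many compact discs reduces everything to symmetric expressions $\sum_iF(\lambda_i(f))$, which are measurable. Once that reduction is in place, the choice of test function $F$ makes the contradiction routine.
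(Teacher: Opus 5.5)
Your proof is correct, but it takes a different route from the one the paper relies on. The paper gives no proof of its own: it imports the statement from its block-GLT reference (Lemma~2.3 and Corollary~2.15 there). There the result comes out in two steps through the essential range $ER(f)$ of the matrix-valued symbol. First, $\Lambda(f(\xx))\subseteq ER(f)$ for a.e.\ $\xx$. Second, a spectral distribution makes every point of $ER(f)$ attract the spectra of the $A_n$, so $ER(f)\subseteq\overline S$ whenever $\Lambda(A_n)\subseteq S$.

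You fuse these two steps into a single contradiction:
\begin{itemize}
\item the countable cover of $\mathbb C\setminus\overline S$ by rational closed discs $K_m$ plays the role of the fact that a measurable function takes values in its essential range a.e.;
\item the single test function equal to $1$ on $K_m$ and vanishing on $\overline S$ plays the role of the attraction theorem.
\end{itemize}

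Your route is more economical and self-contained. It uses only the measurability of $\xx\mapsto\sum_iF(\lambda_i(f(\xx)))$, which is already needed for Definition~\ref{def-distribution} to make sense. It also avoids measurable eigenvalue selections and the notion of essential range altogether. The cited route, by contrast, places the lemma inside a more informative theory: every point of $ER(f)$ attracts the spectra of the $A_n$. The same clustering machinery is what the paper later invokes for the weak cluster at $1$ in Remark~\ref{bJp-efficiency}.

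One point of precision. The identity $E_m=\bigcap_j\{\xx:\sum_iF_j(\lambda_i(f(\xx)))>0\}$ rests on the support condition $\mathrm{supp}\,F_j\subseteq B(z_m,r_m+1/j)$ together with the finiteness of $\Lambda(f(\xx))$. Monotone convergence $F_j\downarrow\chi_{K_m}$ alone would not give it, so that condition is essential rather than merely an example.
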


\smallskip

\noindent\textbf{Zero-distributed sequences.}
A zero-distributed sequence is a sequence of matrices $\{Z_n\}_n$ such that $\{Z_n\}_n\sim_\sigma0$, i.e.,
\[ \lim_{n\to\infty}\frac1{d_n\wedge e_n}\sum_{i=1}^{d_n\wedge e_n}F(\sigma_i(Z_n))=F(0),\qquad\forall\,F\in C_c(\mathbb R), \]
where $d_n\times e_n$ is the size of $Z_n$.

\bigskip

\noindent\textbf{Toeplitz sequences.}
If $f:[-\pi,\pi]\to\mathbb C^{s\times t}$ is in $L^1([-\pi,\pi])$, its Fourier coefficients are denoted by $\{f_k\}_{k\in\mathbb Z}$ and are defined as follows:
\begin{equation}\label{Fc}
f_k=\frac1{2\pi}\int_{-\pi}^{\pi}f(\theta){\rm e}^{-{\rm i}k\theta}{\rm d}\theta\:\in\:\mathbb{C}^{s\times t},\qquad k\in\mathbb Z,
\end{equation}
where the integrals are computed componentwise. The $n$th (block) Toeplitz matrix generated by a function $f:[-\pi,\pi]\to\mathbb C^{s\times t}$ belonging to $L^1([-\pi,\pi])$ is the $ns\times nt$ matrix denoted by $T_n(f)$ and defined as follows:
\[ T_n(f)=[f_{i-j}]_{i,j=1}^n. \]
Any sequence of matrices of the form $\{T_{d_n}(f)\}_n$, with $d_n\to\infty$ and $f:[-\pi,\pi]\to\mathbb C^{s\times t}$ in $L^1([-\pi,\pi])$, is referred to as a (block) Toeplitz sequence generated by $f$.

\bigskip

\noindent\textbf{Sequences of diagonal sampling matrices.}
If $a:[0,1]\to\mathbb C^{s\times t}$, the $n$th (block) diagonal sampling matrix generated by $a$ is the $ns\times nt$ (block) diagonal matrix denoted by $D_n(a)$ and defined as follows:
\[ D_n(a)=\mathop{\rm diag}_{i=1,\ldots,n}a\Bigl(\frac in\Bigr). \]
Any sequence of matrices of the form $\{D_{d_n}(a)\}_n$, with $d_n\to\infty$ and $a:[0,1]\to\mathbb C^{s\times t}$, is referred to as a sequence of (block) diagonal sampling matrices generated by $a$.

\bigskip

\noindent\textbf{Approximating classes of sequences.}
The notion of approximating classes of sequences (a.c.s.)\ is the cornerstone of an asymptotic approximation theory for sequences of matrices that has been developed since the last years of the XX century; see \cite[Chapter~5]{GLTbookI}. The formal definition of a.c.s.\ is reported in Definition~\ref{a.c.s.}. Throughout this paper, we denote by $\|\cdot\|$ the Euclidean norm ($2$-norm) of vectors and the associated operator norm (induced norm) for matrices, which is defined by $\|A\|=\max\{\|A\xx\|:\xx\in\mathbb C^n,\,\|\xx\|=1\}$ for every $A\in\mathbb C^{m\times n}$.

\begin{definition}[\textbf{approximating class of sequences}]\label{a.c.s.}
Let $\{A_n\}_n$ be a sequence of matrices with $A_n$ of size $d_n\times e_n$, and let $\{\{B_{n,m}\}_n\}_m$ be a sequence of sequences of matrices with $B_{n,m}$ of size $d_n\times e_n$. We say that $\{\{B_{n,m}\}_n\}_m$ is an approximating class of sequences (a.c.s.)\ for $\{A_n\}_n$, and we write $\{B_{n,m}\}_n\xrightarrow{\rm a.c.s.}\{A_n\}_n$, if the following condition is met: for every $m$ there exists $n_m$ such that, for $n\ge n_m$,
\begin{equation*}
A_n=B_{n,m}+R_{n,m}+N_{n,m},\qquad{\rm rank}(R_{n,m})\le c(m)(d_n\wedge e_n),\qquad\|N_{n,m}\|\le\omega(m),
\end{equation*}
where $n_m,\,c(m),\,\omega(m)$ depend only on $m$ and $\displaystyle\lim_{m\to\infty}c(m)=\lim_{m\to\infty}\omega(m)=0$.
\end{definition}

\smallskip

\noindent\textbf{GLT sequences.}
In this section, we collect the properties of GLT sequences that we need in this paper.
We begin with the following definition, which is an equivalent alternative to the usual (complicated) definition of GLT sequences.
Throughout this paper, we denote by $I_s$ the $s\times s$ identity matrix.

\begin{definition}[\textbf{generalized locally Toeplitz sequence}]\label{GLT_def}
Let $\{X_n\}_n$ be a sequence of matrices, with $X_n$ of size $d_ns\times d_nt$ for some fixed positive integers $s,t$ and some positive integer sequence $\{d_n\}_n$ tending to $\infty$, and let $\kappa:[0,1]\times[-\pi,\pi]\to\mathbb C^{s\times t}$ be measurable. We say that $\{X_n\}_n$ is a (block) generalized locally Toeplitz (GLT) sequence with symbol $\kappa$, and we write $\{X_n\}_n\sim_{\rm GLT}\kappa$, if there exist functions $a_{i,m}$, $f_{i,m}$, $i=1,\ldots,N_m$, such that
\begin{itemize}[nolistsep,leftmargin=*]
	\item $a_{i,m}:[0,1]\to\mathbb C$ is continuous a.e.\ on $[0,1]$ and $f_{i,m}:[-\pi,\pi]\to\mathbb C^{s\times t}$ belongs to $L^1([-\pi,\pi])$, \vspace{3pt}
	\item $\kappa_m(x,\theta)=\sum_{i=1}^{N_m}a_{i,m}(x)f_{i,m}(\theta)\to\kappa(x,\theta)$ a.e.\ on $[0,1]\times[-\pi,\pi]$, \vspace{3pt}
	\item $\{X_{n,m}\}_n=\bigl\{\sum_{i=1}^{N_m}D_{d_n}(a_{i,m}I_s)T_{d_n}(f_{i,m})\bigr\}_n\xrightarrow{\rm a.c.s.}\{X_n\}_n$.
\end{itemize}
\end{definition}

The properties of GLT sequences that we need in this paper are listed below; for the corresponding proofs, see \cite{bgR,bg}.
Before being able to formulate these properties, we need to introduce some notation and terminology.
Throughout this paper, we denote by $O_{s,t}$ the $s\times t$ zero matrix. The matrix $O_{s,s}$ is denoted by $O_s$. If the size is clear from the context, we often write $O$ instead of $O_{s,t}$ and $O_s$. The conjugate transpose of a matrix $A$ is denoted by $A^*$. If $A$ is a matrix, we denote by $A^\dag$ the Moore--Penrose pseudoinverse of $A$. What is relevant for our purposes is that $A^\dag=A^{-1}$ whenever $A$ is invertible. For more details on the pseudoinverse of a matrix, see \cite{Bini,GV}. If $A$ is a diagonalizable matrix and $f$ is a complex-valued function defined at each point of $\Lambda(A)$, we denote by $f(A)$ the unique matrix such that $f(A)\vv=f(\lambda)\vv$ whenever $A\vv=\lambda\vv$. Note that a spectral decomposition of $A$ immediately yields a spectral decomposition of $f(A)$:
\begin{equation}\label{A->f(A)}
A=V\begin{bmatrix}\lambda_1 & & \\ & \ddots & \\ & & \lambda_n\end{bmatrix}V^{-1}\quad\implies\quad f(A)=V\begin{bmatrix}f(\lambda_1) & & \\ & \ddots & \\ & & f(\lambda_n)\end{bmatrix}V^{-1}.
\end{equation}
For more on matrix functions, we refer the reader to Higham's book~\cite{Higham}.
Hereafter, the composite function $f\circ g$ is denoted by $f(g)$.

\begin{enumerate}[nolistsep,leftmargin=37pt]
	\item[\textbf{GLT0.}] Let $\{X_n\}_n$ be a sequence of matrices, with $X_n$ of size $d_ns\times d_nt$ for some fixed positive integers $s,t$ and some positive integer sequence $\{d_n\}_n$ tending to $\infty$, and let $\kappa,\xi:[0,1]\times[-\pi,\pi]\to\mathbb C^{s\times t}$ be measurable.
	\begin{itemize}[nolistsep,leftmargin=*]
		\item If $\{X_n\}_n\sim_{\rm GLT}\kappa$ and $\kappa=\xi$ a.e., then $\{X_n\}_n\sim_{\rm GLT}\xi$.
		\item If $\{X_n\}_n\sim_{\rm GLT}\kappa$ and $\{X_n\}_n\sim_{\rm GLT}\xi$, then $\kappa=\xi$ a.e.
	\end{itemize}
	\item[\textbf{GLT1.}] Let $\{X_n\}_n$ be a sequence of matrices, with $X_n$ of size $d_ns\times d_nt$ for some fixed positive integers $s,t$ and some positive integer sequence $\{d_n\}_n$ tending to $\infty$, and let $\kappa:[0,1]\times[-\pi,\pi]\to\mathbb C^{s\times t}$ be measurable.
	\begin{itemize}[nolistsep,leftmargin=*]
		\item If $\{X_n\}_n\sim_{\rm GLT}\kappa$, then $\{X_n\}_n\sim_\sigma\kappa$.
		\item If $\{X_n\}_n\sim_{\rm GLT}\kappa$ and the matrices $X_n$ are Hermitian, then $s=t$, $\kappa$ is Hermitian a.e., and $\{X_n\}_n\sim_\lambda\kappa$.
	\end{itemize}
	\item[\textbf{GLT2.}] Let $s,t$ be positive integers and let $\{d_n\}_n$ be a positive integer sequence tending to $\infty$. Then,
	\begin{itemize}[nolistsep,leftmargin=*]
		\item $\{T_{d_n}(f)\}_n\sim_{\rm GLT}\kappa(x,\theta)=f(\theta)$ if $f:[-\pi,\pi]\to\mathbb C^{s\times t}$ belongs to $L^1([-\pi,\pi])$;
		\item $\{D_{d_n}(a)\}_n\sim_{\rm GLT}\kappa(x,\theta)=a(x)$ if $a:[0,1]\to\mathbb C^{s\times t}$ is continuous a.e.;
		\item for every sequence of matrices $\{Z_n\}_n$ with $Z_n$ of size $d_ns\times d_nt$, we have
		$\{Z_n\}_n\sim_{\rm GLT}\kappa(x,\theta)=O_{s,t}$ if and only if $\{Z_n\}_n\sim_\sigma0$.
	\end{itemize}
	\item[\textbf{GLT3.}] Let $\{X_n\}_n$, $\{Y_n\}_n$ be sequences of matrices, with $X_n$ of size $d_ns\times d_nt$ and $Y_n$ of size $d_nu\times d_nv$ for some fixed positive integers $s,t,u,v$ and some positive integer sequence $\{d_n\}_n$ tending to $\infty$, and let $\kappa:[0,1]\times[-\pi,\pi]\to\mathbb C^{s\times t}$ and $\xi:[0,1]\times[-\pi,\pi]\to\mathbb C^{u\times v}$ be measurable. Suppose that $\{X_n\}_n\sim_{\rm GLT}\kappa$ and $\{Y_n\}_n\sim_{\rm GLT}\xi$. Then,
	\begin{itemize}[nolistsep,leftmargin=*]
		\item $\{X_n^*\}_n\sim_{\rm GLT}\kappa^*$;
		\item $\{\alpha X_n+\beta Y_n\}_n\sim_{\rm GLT}\alpha\kappa+\beta\xi$ for every $\alpha,\beta\in\mathbb C$ if $\kappa$ and $\xi$ are summable (i.e., $s=u$ and $t=v$);
		\item $\{X_nY_n\}_n\sim_{\rm GLT}\kappa\xi$ if $\kappa$ and $\xi$ are multipliable (i.e., $t=u$);
		\item $\{X_n^\dag\}_n\sim_{\rm GLT}\kappa^{-1}$ if $\kappa$ is invertible a.e.;
		\item $\{f(X_n)\}_n\sim_{\rm GLT}f(\kappa)$ if the matrices $X_n$ are Hermitian and $f:\mathbb C\to\mathbb C$ is continuous.
	\end{itemize}
\end{enumerate}

\section{GLT preconditioning}\label{sec:GLTp}

The purpose if this section is to prove Theorem~\ref{exe-preconditioning}, which is used in Section~\ref{sec:bJGSp-GLT} to theoretically justify the efficiency of block Jacobi preconditioners for GLT sequences; see Remark~\ref{bJp-efficiency}.
Theorem~\ref{exe-preconditioning} is the generalization of \cite[Theorem~3.2]{GLT-intro} to the case where the involved GLT sequences have matrix-valued symbols.
To prove it, we need the following lemma.
Throughout this paper, we use the abbreviations HPD and HPSD for Hermitian positive definite and Hermitian positive semi-definite, respectively.

\begin{lemma}\label{GLT_HPSD->symbol_HPSD}
Let $\{A_n\}_n$ be a sequence of HPSD matrices, with $A_n$ of size $d_ns$ for some fixed positive integer $s$ and some positive integer sequence $\{d_n\}_n$ tending to $\infty$, and let $\kappa:[0,1]\times[-\pi,\pi]\to\mathbb C^{s\times s}$ be measurable. If $\{A_n\}_n\sim_{\rm GLT}\kappa$, then $\kappa$ is HPSD a.e.
\end{lemma}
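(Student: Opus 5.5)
The plan is to combine GLT1 with Lemma~\ref{S1-block}. Since each $A_n$ is HPSD, it is Hermitian. The second item of GLT1 then gives three facts at once: $s=t$ (already assumed here), $\kappa$ is Hermitian a.e., and $\{A_n\}_n\sim_\lambda\kappa$.

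Next I would use the fact that each $A_n$ is HPSD, so its spectrum satisfies $\Lambda(A_n)\subseteq[0,\infty)=:S$ for all $n$. This set is already closed, so $\overline S=[0,\infty)$. Lemma~\ref{S1-block} applies with $D=[0,1]\times[-\pi,\pi]$, which has $0<\mu_2(D)=2\pi<\infty$. It yields $\Lambda(\kappa(x,\theta))\subseteq[0,\infty)$ for a.e.\ $(x,\theta)$.

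To conclude, I would intersect the two full-measure sets: the one where $\kappa$ is Hermitian and the one where its eigenvalues are nonnegative. On the intersection, $\kappa(x,\theta)$ is a Hermitian matrix with nonnegative eigenvalues. By the spectral theorem it is therefore HPSD, and the intersection is still of full measure, so $\kappa$ is HPSD a.e.

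I do not expect a real obstacle. The only point to watch is that the eigenvalue distribution must be taken with respect to the matrix-valued $\kappa$ itself. This is exactly how GLT1 and Lemma~\ref{S1-block} are formulated (via $\Lambda(f)$ for matrix-valued $f$), so no reduction to the scalar case is needed.
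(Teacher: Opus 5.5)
Your proof is correct and follows the paper's argument: GLT1 gives that $\kappa$ is Hermitian a.e.\ and $\{A_n\}_n\sim_\lambda\kappa$, and Lemma~\ref{S1-block} with $S=[0,\infty)$ gives $\Lambda(\kappa)\subseteq[0,\infty)$ a.e., so $\kappa$ is HPSD a.e. The paper leaves implicit the intersection of the two full-measure sets, which you state explicitly.
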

\begin{proof}
Since the matrices $A_n$ are Hermitian, we know from {\bf GLT1} that $\kappa$ is Hermitian a.e.\ and $\{A_n\}_n\sim_\lambda\kappa$.
Since the matrices $A_n$ are positive semi-definite, $\Lambda(A_n)\subset[0,\infty)$ for all $n$.
Hence, by Lemma~\ref{S1-block}, $\Lambda(\kappa)\subset[0,\infty)$ a.e.
Thus, $\kappa$ is HPSD a.e.
\end{proof}

\begin{theorem}\label{exe-preconditioning}
Let $\{A_n\}_n$ be a sequence of Hermitian matrices, with $A_n$ of size $d_ns$ for some fixed positive integer $s$ and some positive integer sequence $\{d_n\}_n$ tending to $\infty$, and let $\{P_n\}_n$ be a sequence of HPD matrices, with $P_n$ of size $d_ns$.
Suppose that $\{A_n\}_n\sim_{\rm GLT}\kappa$ and $\{P_n\}_n\sim_{\rm GLT}\xi$, where $\kappa,\xi:[0,1]\times[-\pi,\pi]\to\mathbb C^{s\times s}$ are measurable and $\xi$ is invertible a.e.
Then, the sequence of preconditioned matrices $P_n^{-1}A_n$ satisfies
\begin{equation}\label{GLTp-GLTsigla}
\{P_n^{-1}A_n\}_n\sim_{\rm GLT}\xi^{-1}\kappa,\qquad\{P_n^{-1}A_n\}_n\sim_\sigma\xi^{-1}\kappa,\qquad\{P_n^{-1}A_n\}_n\sim_\lambda\xi^{-1}\kappa.
\end{equation}
\end{theorem}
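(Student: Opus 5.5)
The plan is to get the GLT relation directly from the algebra of GLT sequences, then deduce the singular value distribution from \textbf{GLT1}, and obtain the spectral distribution through a symmetrization argument, since $P_n^{-1}A_n$ is not Hermitian.

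\emph{GLT relation.} Since $P_n$ is HPD, it is invertible, so $P_n^\dag=P_n^{-1}$. Because $\xi$ is invertible a.e., \textbf{GLT3} gives $\{P_n^{-1}\}_n\sim_{\rm GLT}\xi^{-1}$. The product rule in \textbf{GLT3} then yields $\{P_n^{-1}A_n\}_n\sim_{\rm GLT}\xi^{-1}\kappa$. The singular value distribution follows at once from the first item of \textbf{GLT1}.

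\emph{Spectral distribution.} I would introduce the Hermitian matrices $P_n^{-1/2}A_nP_n^{-1/2}$, which are similar to $P_n^{-1}A_n$ via $P_n^{1/2}$ and therefore have the same eigenvalues. The square root is handled with the last item of \textbf{GLT3}, applied to $f(z)=\sqrt{|z|}$, which is continuous on $\mathbb C$. Since $P_n$ is Hermitian with positive spectrum, $f(P_n)=P_n^{1/2}$, and hence $\{P_n^{1/2}\}_n\sim_{\rm GLT}f(\xi)$.

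By Lemma~\ref{GLT_HPSD->symbol_HPSD}, $\xi$ is HPSD a.e.; being invertible a.e., it is HPD a.e. Consequently $f(\xi)=\xi^{1/2}$, understood through the spectral decomposition~\eqref{A->f(A)} of the Hermitian matrix $\xi(x,\theta)$, and $\xi^{1/2}$ is invertible a.e. Applying \textbf{GLT3} once more, via the pseudoinverse item, gives $\{P_n^{-1/2}\}_n\sim_{\rm GLT}\xi^{-1/2}$. The product rule then gives $\{P_n^{-1/2}A_nP_n^{-1/2}\}_n\sim_{\rm GLT}\xi^{-1/2}\kappa\,\xi^{-1/2}$.

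These matrices are Hermitian, so the second item of \textbf{GLT1} gives $\{P_n^{-1/2}A_nP_n^{-1/2}\}_n\sim_\lambda\xi^{-1/2}\kappa\,\xi^{-1/2}$. Two similarity facts finish the argument. For every $n$, $P_n^{-1}A_n$ and $P_n^{-1/2}A_nP_n^{-1/2}$ have the same eigenvalues. Pointwise a.e., $\xi^{-1}\kappa=\xi^{-1/2}\bigl(\xi^{-1/2}\kappa\,\xi^{-1/2}\bigr)\xi^{1/2}$ has the same eigenvalues as $\xi^{-1/2}\kappa\,\xi^{-1/2}$. Both sides of the defining relation in Definition~\ref{def-distribution} are therefore unchanged, and $\{P_n^{-1}A_n\}_n\sim_\lambda\xi^{-1}\kappa$.

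\emph{Main obstacle.} The only delicate step is the symbol bookkeeping for the matrix square root. One must check that $f(\xi)$ coincides a.e.\ with the HPD square root $\xi^{1/2}$ and is invertible a.e. Both facts rest on Lemma~\ref{GLT_HPSD->symbol_HPSD} together with the a.e.\ invertibility of $\xi$. The remaining steps are direct applications of \textbf{GLT1} and \textbf{GLT3}.
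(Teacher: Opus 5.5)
Your proposal is correct and follows essentially the same route as the paper: \textbf{GLT3} and \textbf{GLT1} for the first two relations, then symmetrization via $P_n^{-1/2}A_nP_n^{-1/2}$ using $f(\lambda)=|\lambda|^{1/2}$, Lemma~\ref{GLT_HPSD->symbol_HPSD} to identify the symbol with $\xi^{-1/2}\kappa\,\xi^{-1/2}$ a.e., the Hermitian item of \textbf{GLT1}, and the two similarities. The only difference is the order of steps. You show $\xi$ is HPD a.e.\ before inverting the square root, whereas the paper carries $|\xi|^{-1/2}$ and replaces it with $\xi^{-1/2}$ at the end. In your version, replacing $f(\xi)$ by $\xi^{1/2}$ tacitly uses \textbf{GLT0}, which you should cite explicitly.
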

\begin{proof}
The GLT relation in \eqref{GLTp-GLTsigla} is a direct consequence of {\bf GLT3}. The singular value distribution in \eqref{GLTp-GLTsigla} follows immediately from the GLT relation in \eqref{GLTp-GLTsigla} and {\bf GLT1}. We prove the spectral distribution in \eqref{GLTp-GLTsigla}.
Since $P_n$ is HPD, the eigenvalues of $P_n$ are positive and the matrices $P_n^{1/2}$ and $P_n^{-1/2}$ are well-defined as functions of $P_n$.
Note that $P_n^{1/2}P_n^{-1/2}=I_n$ and both $P_n^{1/2}$ and $P_n^{-1/2}$ are HPD.\,\footnote{\,These properties follow from \eqref{A->f(A)} applied with $A=P_n$ and with a unitary matrix $V$, which is possible because $P_n$ is HPD.}
Note also that
\begin{equation}\label{sim.sim}
P_n^{-1}A_n\sim P_n^{1/2}(P_n^{-1}A_n)P_n^{-1/2}=P_n^{-1/2}A_nP_n^{-1/2},
\end{equation}
where $X\sim Y$ means that $X$ is similar to $Y$.
The good news is that $P_n^{-1/2}A_nP_n^{-1/2}$ is Hermitian, because $P_n^{-1/2}$ and $A_n$ are both Hermitian.
By {\bf GLT3}---the last item of which is applied with $f(\lambda)=|\lambda|^{1/2}$---we have
\begin{align}
\{P_n^{1/2}\}_n&\sim_{\rm GLT}|\xi|^{1/2},\notag\\
\{P_n^{-1/2}\}_n=\{(P_n^{1/2})^{-1}\}_n&\sim_{\rm GLT}(|\xi|^{1/2})^{-1}=|\xi|^{-1/2},\notag\\
\{P_n^{-1/2}A_nP_n^{-1/2}\}_n&\sim_{\rm GLT}|\xi|^{-1/2}\kappa|\xi|^{-1/2}.\label{illabel}
\end{align}
Note that $\xi$ is HPD a.e.\ by Lemma~\ref{GLT_HPSD->symbol_HPSD} and the assumption that $\xi$ is invertible a.e.
Hence, $|\xi|^{-1/2}=\xi^{-1/2}$ a.e., and we infer from \eqref{illabel} and {\bf GLT0} that
\[ \{P_n^{-1/2}A_nP_n^{-1/2}\}_n\sim_{\rm GLT}\xi^{-1/2}\kappa\hspace{1pt}\xi^{-1/2}. \]
Since $P_n^{-1/2}A_nP_n^{-1/2}$ is Hermitian, {\bf GLT1} yields
\[ \{P_n^{-1/2}A_nP_n^{-1/2}\}_n\sim_\lambda\xi^{-1/2}\kappa\hspace{1pt}\xi^{-1/2}. \]
Thus, by the similarity \eqref{sim.sim}, $\{P_n^{-1}A_n\}_n\sim_\lambda\xi^{-1/2}\kappa\hspace{1pt}\xi^{-1/2}$. To conclude, we observe that
\[ \xi^{-1/2}\kappa\hspace{1pt}\xi^{-1/2}=\xi^{1/2}(\xi^{-1}\kappa)\xi^{-1/2}\sim\xi^{-1}\kappa\,\mbox{ a.e.} \]
This implies that $\{P_n^{-1}A_n\}_n\sim_\lambda\xi^{-1}\kappa$ by Definition~\ref{def-distribution}.
\end{proof}

\section{Block Jacobi and block Gauss--Seidel preconditioners for GLT sequences}\label{sec:bJGSp-GLT}

This section is devoted to the proof of our main results (Theorems~\ref{blockdiag(GLT)=GLT}--\ref{blocktril(GLT)=GLT}).
Throughout this paper, if $X\in\mathbb C^{m_1\times m_2}$ and $Y\in\mathbb C^{\ell_1\times\ell_2}$, the tensor (Kronecker) product of $X$ and $Y$ is the $m_1\ell_1\times m_2\ell_2$ matrix defined by
\[ X\otimes Y=[X_{ij}Y]_{i=1,\ldots,m_1}^{j=1,\ldots,m_2}=\begin{bmatrix}X_{11}Y & \cdots & X_{1m_2}Y\\ \vdots & & \vdots\\ X_{m_11}Y & \cdots & X_{m_1m_2}Y\end{bmatrix}, \]
and the direct sum of $X$ and $Y$ is the $(m_1+\ell_1)\times(m_2+\ell_2)$ matrix defined by
\[ X\oplus Y={\rm diag}(X,Y)=\begin{bmatrix}X & O\\ O & Y\end{bmatrix}. \]
A partition of a positive integer $n$ is a vector of positive integers $(n_1,\ldots,n_\nu)$ such that $n_1+\ldots+n_\nu=n$.

\begin{definition}[\textbf{block Jacobi and block Gauss--Seidel preconditioners}]
Let $A=[A_{ij}]_{i,j=1}^n\in\mathbb C^{ns\times nt}$ be a block matrix with blocks $A_{ij}\in\mathbb C^{s\times t}$ for every $i,j=1,\ldots,n$, and let $(n_1,\ldots,n_\nu)$ be a partition of $n$.
\begin{itemize}[nolistsep,leftmargin=*]
	\item The block Jacobi preconditioner of $A$ generated by the partition $(n_1,\ldots,n_\nu)$ is the matrix
	\[ \mathop{\rm blockdiag}_{n_1,\ldots,n_\nu}^{s,t}(A)=\begin{bmatrix}A^{(11)} & & \\ & \ddots & \\ & & A^{(\nu\nu)}\end{bmatrix}=A^{(11)}\oplus\cdots\oplus A^{(\nu\nu)}, \]
	where $A^{(kk)}$ is the $k$th diagonal block of $A$ of size $n_ks\times n_kt$, i.e., $A^{(kk)}=[A_{ij}]_{i,j=n_1+\ldots+n_{k-1}+1}^{n_1+\ldots+n_k}$.
	\item The block Gauss--Seidel preconditioner of $A$ generated by the partition $(n_1,\ldots,n_\nu)$ is the matrix
	\[ \mathop{\rm blocktril}_{n_1,\ldots,n_\nu}^{s,t}(A)=\begin{bmatrix}A^{(11)} & & \\ \vdots & \ddots & \\[3pt] A^{(\nu1)} & \cdots & A^{(\nu\nu)}\end{bmatrix}, \]
	where $A^{(pq)}$ is the block of $A$ of size $n_ps\times n_qt$ in position $(p,q)$, i.e., $A^{(pq)}=[A_{ij}]_{i=n_1+\ldots+n_{p-1}+1,\ldots,n_1+\ldots+n_p}^{j=n_1+\ldots+n_{q-1}+1,\ldots,n_1+\ldots+n_q}$.
\end{itemize}
\end{definition}

We are now ready to state and prove Theorems~\ref{blockdiag(GLT)=GLT}--\ref{blocktril(GLT)=GLT}. They show that any sequence of block Jacobi preconditioners or block Gauss--Seidel preconditioners for a GLT sequence with symbol $\kappa$ is again a GLT sequence with symbol $\kappa$, as long as the number of blocks in each preconditioner is fixed.
Throughout this paper, the characteristic (indicator) function of a set $E$ is denoted by $\chi_E$.

\begin{theorem}\label{blockdiag(GLT)=GLT}
Let $\nu,s,t\ge1$ be fixed integers, let $\{d_n\}_n$ be a positive integer sequence tending to $\infty$, and, for every~$n$, let $(n_1,\ldots,n_\nu)=(n_1(n),\ldots,n_\nu(n))$ be a partition of $d_n$.
Let $\kappa:[0,1]\times[-\pi,\pi]\to\mathbb C^{s\times t}$ be measurable and let $\{A_n\}_n$ be a sequence of matrices with $A_n$ of size $d_ns\times d_nt$ and $\{A_n\}_n\sim_{\rm GLT}\kappa$. Then,
\[ \Bigl\{\mathop{\rm blockdiag}_{n_1,\ldots,n_\nu}^{s,t}(A_n)\Bigr\}_n\sim_{\rm GLT}\kappa. \]
\end{theorem}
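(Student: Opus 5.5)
The plan is to write $\mathop{\rm blockdiag}$ as a sum of products of $A_n$ with diagonal projector matrices, and to show that the error this introduces is zero-distributed. Let $\sigma_k=n_1+\ldots+n_k$, with $\sigma_0=0$. Define the diagonal selector
\[ E_n^{(k)}=\mathop{\rm diag}_{i=1,\ldots,d_n}\chi_{\{\sigma_{k-1}<i\le\sigma_k\}}, \]
and set $\Pi_{n,k}^{s}=E_n^{(k)}\otimes I_s$ and $\Pi_{n,k}^{t}=E_n^{(k)}\otimes I_t$. Then
\[ \mathop{\rm blockdiag}_{n_1,\ldots,n_\nu}^{s,t}(A_n)=\sum_{k=1}^\nu\Pi_{n,k}^{s}A_n\Pi_{n,k}^{t}, \]
and also $A_n=\sum_{p,q=1}^\nu\Pi_{n,p}^{s}A_n\Pi_{n,q}^{t}$. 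It therefore suffices to show that each off-diagonal term $\{\Pi_{n,p}^{s}A_n\Pi_{n,q}^{t}\}_n$ with $p\ne q$ is zero-distributed. Once this is done, {\bf GLT2} (third item) and {\bf GLT3} (linearity, since $\nu$ is fixed) give
\[ \{\mathop{\rm blockdiag}(A_n)\}_n=\{A_n\}_n-\sum_{p\ne q}\{\Pi_{n,p}^{s}A_n\Pi_{n,q}^{t}\}_n\sim_{\rm GLT}\kappa. \]

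The main obstacle is that the partition depends on $n$ arbitrarily, so $E_n^{(k)}$ need not be a diagonal sampling matrix $D_{d_n}(a)$ of a fixed function. I would handle this with a compactness/subsequence argument. By a standard property of a.c.s./zero-distributed sequences, $\{Z_n\}_n\sim_\sigma0$ holds iff every subsequence has a further subsequence that is zero-distributed. Given any subsequence, extract a further one along which $\sigma_k/d_n\to c_k\in[0,1]$ for all $k$, with $0=c_0\le c_1\le\ldots\le c_\nu=1$. Along it, $E_n^{(k)}$ differs from $D_{d_n}(\chi_{(c_{k-1},c_k]})$ in at most $|\sigma_k-c_kd_n|+|\sigma_{k-1}-c_{k-1}d_n|+2=o(d_n)$ diagonal entries. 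The difference is therefore a low-rank perturbation of rank $o(d_n)$, hence zero-distributed. The indicator $\chi_{(c_{k-1},c_k]}$ is continuous a.e., so by {\bf GLT2} we get $\{\Pi_{n,k}^{s}\}_n\sim_{\rm GLT}\chi_{(c_{k-1},c_k]}(x)I_s$ along the subsequence, and similarly for $t$. Note that the GLT axioms are stated for sequences indexed by an infinite subset of $\mathbb N$, so they apply to subsequences.

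By {\bf GLT3} (products), along this sub-subsequence
\[ \{\Pi_{n,p}^{s}A_n\Pi_{n,q}^{t}\}_n\sim_{\rm GLT}\chi_{(c_{p-1},c_p]}(x)\chi_{(c_{q-1},c_q]}(x)\kappa(x,\theta). \]
For $p\ne q$ the intervals are disjoint, so the symbol is $0$, and {\bf GLT2} gives zero-distribution along the sub-subsequence. Degenerate intervals with $c_{k-1}=c_k$ cause no trouble. Hence every subsequence of the off-diagonal terms has a zero-distributed further subsequence, and the whole off-diagonal sequence is zero-distributed, which concludes the argument.

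The delicate points to verify are, first, the subsequence characterization of zero-distribution, which is immediate from the definition via the limit of $\frac1{d_n}\sum F(\sigma_i)$ (limits are determined by sub-subsequences), and second, that low-rank $o(d_n)$ perturbations are zero-distributed, which is standard (take $R_{n,m}$ in Definition~\ref{a.c.s.} with $c(m)\to0$, or apply interlacing).
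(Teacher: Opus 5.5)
Your proof is correct and follows essentially the same route as the paper. You extract subsequences along which the cumulative ratios $(n_1+\ldots+n_k)/d_n$ converge, replace the block selectors by diagonal sampling matrices of indicator functions up to corrections of rank $o(d_n)$, and conclude via \textbf{GLT2}--\textbf{GLT3}. The differences are cosmetic: you show the off-diagonal pieces are zero-distributed, so the symbol comes out exactly $\kappa$ without appealing to \textbf{GLT0}, and you use a direct sub-subsequence argument instead of the paper's Case~1/Case~2 split with a contradiction.
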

\begin{proof}
Let
\[ P_n^J=\mathop{\rm blockdiag}_{n_1,\ldots,n_\nu}^{s,t}(A_n)=\begin{bmatrix}A_n^{(11)} & & \\ & \ddots & \\ & & A_n^{(\nu\nu)}\end{bmatrix}. \]
We have to show that $\{P_n^J\}_n\sim_{\rm GLT}\kappa$. The proof is divided into two cases.

\medskip

\noindent
{\em Case 1.} First, we prove that $\{P_n^J\}_n\sim_{\rm GLT}\kappa$ under the additional assumption that there exists
\[ \lim_{n\to\infty}\frac{n_i}{\sum_{j=1}^\nu n_j}=c_i,\qquad i=1,\ldots,\nu. \]
For this proof, we draw inspiration from \cite[proof of Theorem~3.12]{blocking}.
Define $\rho_i=\sum_{j=1}^ic_j$ for $i=0,\ldots,\nu$ $(\rho_0=0)$ and note that 
\[ P_n^J=\sum_{i=1}^\nu P_{n,i}, \]
where $P_{n,i}$ is the matrix obtained from $P_n^J$ by setting to zero all blocks $A_n^{(kk)}$ except for $A_n^{(ii)}$.
We show that, for every $i=1,\ldots,\nu$,
\begin{equation}\label{tcp}
P_{n,i}=D_{d_n}(\psi_iI_s)A_nD_{d_n}(\psi_iI_t)+R_{n,i},
\end{equation}
where $\psi_k(x)=\chi_{(\rho_{k-1},\rho_k)}(x)$ for $k=1,\ldots,\nu$ and
\begin{equation}\label{r-tcp}
\lim_{n\to\infty}\frac{{\rm rank}(R_{n,i})}{d_n}=0.
\end{equation}
Once this is proved, in view of the equation
\[ P_n^J=\sum_{i=1}^\nu P_{n,i}=\sum_{i=1}^\nu D_{d_n}(\psi_iI_s)A_nD_{d_n}(\psi_iI_t)+\sum_{i=1}^\nu R_{n,i}, \]
and taking into account that each sequence $\{R_{n,i}\}_n$ is zero-distributed by \eqref{r-tcp}, 
the application of {\bf GLT2}--{\bf GLT3} yields
\[ \{P_n^J\}_n\sim_{\rm GLT}\tilde\kappa(x,\theta)=\sum_{i=1}^\nu\psi_i(x)\psi_i(x)\kappa(x,\theta)=\sum_{i=1}^\nu\chi_{(\rho_{i-1},\rho_i)}(x)\kappa(x,\theta), \]
where the latter equality holds because $\psi_i(x)\psi_i(x)=\chi_{(\rho_{i-1},\rho_i)}(x)$ for every $i=1,\ldots,\nu$.
Since the function $\tilde\kappa(x,\theta)=\sum_{i=1}^\nu\chi_{(\rho_{i-1},\rho_i)}(x)\kappa(x,\theta)$ is equal to $\kappa(x,\theta)$ a.e.\ in $[0,1]\times[-\pi,\pi]$, the GLT relation $\{P_n^J\}_n\sim_{\rm GLT}\tilde\kappa(x,\theta)$ is equivalent to the thesis $\{P_n^J\}_n\sim_{\rm GLT}\kappa(x,\theta)$ by {\bf GLT0}.

To conclude the proof, it only remains to prove \eqref{tcp}--\eqref{r-tcp}.
Set $\epsilon_k=c_k-n_k/d_n$ for $k=1,\ldots,\nu$ (note that $\epsilon_k\to0$ as $n\to\infty$ by assumption). Define $N_p=\sum_{k=1}^pn_k$ and $q_p=d_n\sum_{k=1}^p\epsilon_k$ for $p=0,\ldots,\nu$ ($N_0=q_0=0$). For $i=1,\ldots,\nu$, a direct computation shows that, for every $u,v=1,\ldots,d_n$, the $s\times t$ block in position $(u,v)$ of the $d_ns\times d_nt$ matrix $P_{n,i}$ is given by
\begin{align*}
(P_{n,i})_{uv}&=\left\{\begin{aligned}
&(A_n)_{uv}, &&\mbox{if $N_{i-1}<u,v\le N_i$,}\\
&O_{s,t}, &&\mbox{otherwise,}
\end{aligned}\right.
\end{align*}
and the $s\times t$ block in position $(u,v)$ of the $d_ns\times d_nt$ matrix $D_{d_n}(\psi_iI_s)A_nD_{d_n}(\psi_iI_t)$ is given by
\begin{align*}
(D_{d_n}(\psi_iI_s)A_nD_{d_n}(\psi_iI_t))_{uv}&=(D_{d_n}(\psi_iI_s))_{uu}(A_n)_{uv}(D_{d_n}(\psi_iI_t))_{vv}=\psi_i\Bigl(\frac u{d_n}\Bigr)I_s\,(A_n)_{uv}\,\psi_i\Bigl(\frac v{d_n}\Bigr)I_t\\
&=\left\{\begin{aligned}
&(A_n)_{uv}, &&\mbox{if $N_{i-1}+q_{i-1}<u,v<N_i+q_i$,}\\
&O_{s,t}, &&\mbox{otherwise.}
\end{aligned}\right.
\end{align*}
Therefore, setting $R_{n,i}=P_{n,i}-D_{d_n}(\psi_iI_s)A_nD_{d_n}(\psi_iI_t)$, for every $u,v=1,\ldots,d_n$, the $s\times t$ block $(R_{n,i})_{uv}$ is certainly equal to zero in the following cases:
\begin{itemize}[nolistsep,leftmargin=*]
	\item both $u$ and $v$ belong to
	\[ \bigl(\max(N_{i-1},N_{i-1}+q_{i-1}),\:\min(N_i,N_i+q_i)\bigr); \]
	\item at least one between $u$ and $v$ does not belong to 
	\[ \bigl[\min(N_{i-1},N_{i-1}+q_{i-1}),\:\max(N_i,N_i+q_i)\bigr]. \]
\end{itemize}
The number of integers $u$ belonging to $[\min(N_{i-1},N_{i-1}+q_{i-1}),\max(N_{i-1},N_{i-1}+q_{i-1})]$ is bounded by $|q_{i-1}|+1$ and, similarly, the number of integers $u$ belonging to $[\min(N_i,N_i+q_i),\max(N_i,N_i+q_i)]$ is bounded by $|q_i|+1$. 
It follows that
\[ {\rm rank}(R_{n,i})\le (s+t)(|q_{i-1}|+|q_i|+2)\le(s+t)\biggl(2d_n\sum_{k=1}^\nu|\epsilon_k|+2\biggr)=o(d_n), \]
and \eqref{tcp}--\eqref{r-tcp} are proved.

\medskip

\noindent
{\em Case 2.}
Now, we prove that $\{P_n^J\}_n\sim_{\rm GLT}\kappa$ without imposing additional assumptions.
Let $Z_n=P_n^J-A_n$. By {\bf GLT2}--{\bf GLT3}, the thesis $\{P_n^J\}_n\sim_{\rm GLT}\kappa$ is equivalent to $\{Z_n\}_n\sim_\sigma0$. Suppose by contradiction that the thesis does not hold. Then, $\{Z_n\}_n$ is not a zero-distributed sequence. This means that there exist $F\in C_c(\mathbb R)$ and a subsequence of indices $n\in\mathcal I$ such that
\begin{equation}\label{lim}
\lim_{\substack{\vphantom{\int}n\to\infty\\n\in\mathcal I}}\frac1{d_ns\wedge d_nt}\sum_{i=1}^{d_ns\wedge d_nt}F(\sigma_i(Z_n))=L\ne0.
\end{equation}
We extract from $\mathcal I$ a subsequence of indices $\mathcal I_1\subseteq\mathcal I$ such that there exists
\[ \lim_{\substack{\vphantom{\int}n\to\infty\\n\in\mathcal I_1}}\frac{n_1}{\sum_{j=1}^\nu n_j}=c_1; \]
then, we extract from $\mathcal I_1$ a subsequence of indices $n\in\mathcal I_2\subseteq\mathcal I_1$ such that there exists
\[ \lim_{\substack{\vphantom{\int}n\to\infty\\n\in\mathcal I_2}}\frac{n_2}{\sum_{j=1}^\nu n_j}=c_2; \]
and so on. This extraction procedure ends up with a subsequence of indices $n\in\mathcal I_\nu\subseteq\ldots\subseteq\mathcal I_1\subseteq\mathcal I$ such that there exists
\[ \lim_{\substack{\vphantom{\int}n\to\infty\\n\in\mathcal I_\nu}}\frac{n_i}{\sum_{j=1}^\nu n_j}=c_i,\qquad i=1,\ldots,\nu. \]
By the result of Case~1, we have $\{P_n^J\}_{n\in\mathcal I_\nu}\sim_{\rm GLT}\kappa$, which is equivalent to $\{Z_n\}_{n\in\mathcal I_\nu}\sim_\sigma0$ by {\bf GLT2}--{\bf GLT3} and the obvious GLT relation $\{A_n\}_{n\in\mathcal I_\nu}\sim_{\rm GLT}\kappa$.
In particular, we have
\[ \lim_{\substack{\vphantom{\int}n\to\infty\\n\in\mathcal I_\nu}}\frac1{d_ns\wedge d_nt}\sum_{i=1}^{d_ns\wedge d_nt}F(\sigma_i(Z_n))=0, \]
which is a contradiction to \eqref{lim}.
\end{proof}

\begin{theorem}\label{blocktril(GLT)=GLT}
Let $\nu,s,t\ge1$ be fixed integers, let $\{d_n\}_n$ be a positive integer sequence tending to $\infty$, and, for every~$n$, let $(n_1,\ldots,n_\nu)=(n_1(n),\ldots,n_\nu(n))$ be a partition of $d_n$.
Let $\kappa:[0,1]\times[-\pi,\pi]\to\mathbb C^{s\times t}$ be measurable and let $\{A_n\}_n$ be a sequence of matrices with $A_n$ of size $d_ns\times d_nt$ and $\{A_n\}_n\sim_{\rm GLT}\kappa$. Then,
\[ \Bigl\{\mathop{\rm blocktril}_{n_1,\ldots,n_\nu}^{s,t}(A_n)\Bigr\}_n\sim_{\rm GLT}\kappa. \]
\end{theorem}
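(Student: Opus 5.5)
We follow the scheme of the proof of Theorem~\ref{blockdiag(GLT)=GLT}, again splitting into two cases: the first assumes that the limits $c_i=\lim_{n}n_i/d_n$ exist for $i=1,\ldots,\nu$, and the second removes this assumption. The second case is handled exactly as Case~2 above. Set $Z_n=P_n^{GS}-A_n$. By {\bf GLT2}--{\bf GLT3}, the thesis is equivalent to $\{Z_n\}_n\sim_\sigma0$. If this failed, we would find a subsequence on which the averages $\frac1{d_ns\wedge d_nt}\sum_iF(\sigma_i(Z_n))$ converge to some $L\ne0$. Extracting a further subsequence along which all ratios $n_i/d_n$ converge, the result of Case~1 gives zero-distribution on that subsequence, which is a contradiction. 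So the real work is Case~1.

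In Case~1, set $\rho_i=\sum_{j\le i}c_j$ and $\psi_k=\chi_{(\rho_{k-1},\rho_k)}$ as before. We write
\[ P_n^{GS}=\sum_{p\ge q}P_{n,pq}, \]
where $P_{n,pq}$ keeps only the block $A_n^{(pq)}$ of size $n_ps\times n_qt$ in position $(p,q)$ and sets everything else to zero. The key claim, analogous to \eqref{tcp}--\eqref{r-tcp}, is that
\[ P_{n,pq}=D_{d_n}(\psi_pI_s)A_nD_{d_n}(\psi_qI_t)+R_{n,pq},\qquad {\rm rank}(R_{n,pq})=o(d_n). \]
The comparison of entries is the same computation as before, now with the row index $u$ confined to the $p$th interval and the column index $v$ confined to the $q$th interval. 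The two matrices agree except when $u$ lies near one of the endpoints $N_{p-1},N_p$ (within $|q_{p-1}|+1$ or $|q_p|+1$ indices) or $v$ lies near one of $N_{q-1},N_q$. Hence $R_{n,pq}$ is supported on $O(d_n\sum_k|\epsilon_k|+1)$ block rows and block columns, so its rank is $o(d_n)$. This is a direct adaptation and should be the only slightly fiddly step: one must check that the off-diagonal case $p\ne q$ causes no extra trouble. It does not, since rows and columns are handled independently.

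Summing over $p\ge q$ and applying {\bf GLT2}--{\bf GLT3}, with the zero-distributed remainders handled by the third item of {\bf GLT2}, gives
\[ \{P_n^{GS}\}_n\sim_{\rm GLT}\sum_{p\ge q}\psi_p(x)\psi_q(x)\kappa(x,\theta). \]
For $p\ne q$ the supports $(\rho_{p-1},\rho_p)$ and $(\rho_{q-1},\rho_q)$ are disjoint, so $\psi_p\psi_q=0$. The symbol therefore reduces to $\sum_p\chi_{(\rho_{p-1},\rho_p)}(x)\kappa(x,\theta)$, which equals $\kappa$ a.e. (intervals with $c_p=0$ are empty and harmless). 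The conclusion then follows from {\bf GLT0}.

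Alternatively, one could shortcut Case~1 by writing $P_n^{GS}=P_n^J+L_n$, where $L_n$ is the strictly block-lower part, and showing $\{L_n\}_n\sim_\sigma0$ by the same product decomposition. The direct route above is cleaner.
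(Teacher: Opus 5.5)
Your proposal is correct and follows essentially the same route as the paper's proof. The common steps are: the splitting into a case where the limits $c_i$ exist and a general case, and the decomposition $P_n^{GS}=\sum_{p\ge q}P_{n,pq}$ with $P_{n,pq}=D_{d_n}(\psi_pI_s)A_nD_{d_n}(\psi_qI_t)+R_{n,pq}$. The paper also bounds ${\rm rank}(R_{n,pq})=o(d_n)$ by counting block rows and columns separately, uses $\psi_p\psi_q=0$ for $p\ne q$ together with {\bf GLT0}, and handles the general case by the same subsequence-extraction contradiction.
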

\begin{proof}
The proof follows the same pattern as the proof of Theorem~\ref{blockdiag(GLT)=GLT}. However, we do not skip the details for the sake of completeness.
Let
\[ P_n^{GS}=\mathop{\rm blocktril}_{n_1,\ldots,n_\nu}^{s,t}(A_n)=\begin{bmatrix}A_n^{(11)} & & \\ \vdots & \ddots & \\ A_n^{(\nu1)} & \cdots & A_n^{(\nu\nu)}\end{bmatrix}. \]
We have to show that $\{P_n^{GS}\}_n\sim_{\rm GLT}\kappa$. The proof is divided into two cases.

\medskip

\noindent
{\em Case 1.} First, we prove that $\{P_n^{GS}\}_n\sim_{\rm GLT}\kappa$ under the additional assumption that there exists
\[ \lim_{n\to\infty}\frac{n_i}{\sum_{j=1}^\nu n_j}=c_i,\qquad i=1,\ldots,\nu. \]
Define $\rho_i=\sum_{j=1}^ic_j$ for $i=0,\ldots,\nu$ $(\rho_0=0)$ and note that
\[ P_n^{GS}=\sum_{1\le j\le i\le\nu}P_{n,i,j}, \]
where $P_{n,i,j}$ is the matrix obtained from $P_n^{GS}$ by setting to zero all blocks $A_n^{(pq)}$ except for $A_n^{(ij)}$.
We show that, for every $1\le j\le i\le\nu$,
\begin{equation}\label{tcp'}
P_{n,i,j}=D_{d_n}(\psi_iI_s)A_nD_{d_n}(\psi_jI_t)+R_{n,i,j},
\end{equation}
where $\psi_k(x)=\chi_{(\rho_{k-1},\rho_k)}(x)$ for $k=1,\ldots,\nu$ and
\begin{equation}\label{r-tcp'}
\lim_{n\to\infty}\frac{{\rm rank}(R_{n,i,j})}{d_n}=0.
\end{equation}
Once this is proved, in view of the equation
\[ P_n^{GS}=\sum_{1\le j\le i\le\nu}P_{n,i,j}=\sum_{1\le j\le i\le\nu}D_{d_n}(\psi_iI_s)A_nD_{d_n}(\psi_jI_t)+\sum_{1\le j\le i\le\nu}R_{n,i,j}, \]
and taking into account that each sequence $\{R_{n,i,j}\}_n$ is zero-distributed by \eqref{r-tcp'}, the application of {\bf GLT2}--{\bf GLT3} yields
\[ \{P_n^{GS}\}_n\sim_{\rm GLT}\tilde\kappa(x,\theta)=\sum_{1\le j\le i\le\nu}\psi_i(x)\psi_j(x)\kappa(x,\theta)=\sum_{i=1}^\nu\chi_{(\rho_{i-1},\rho_i)}(x)\kappa(x,\theta), \]
where the latter equality holds because $\psi_i(x)\psi_j(x)=0$ identically whenever $i\ne j$ and $\psi_i(x)\psi_i(x)=\chi_{(\rho_{i-1},\rho_i)}(x)$ for every $i=1,\ldots,\nu$.
Since the function $\tilde\kappa(x,\theta)=\sum_{i=1}^\nu\chi_{(\rho_{i-1},\rho_i)}(x)\kappa(x,\theta)$ is equal to $\kappa(x,\theta)$ a.e.\ in $[0,1]\times[-\pi,\pi]$, the GLT relation $\{P_n^{GS}\}_n\sim_{\rm GLT}\tilde\kappa(x,\theta)$ is equivalent to the thesis $\{P_n^{GS}\}_n\sim_{\rm GLT}\kappa(x,\theta)$ by {\bf GLT0}.

To conclude the proof, it only remains to prove \eqref{tcp'}--\eqref{r-tcp'}.
Set $\epsilon_k=c_k-n_k/d_n$ for $k=1,\ldots,\nu$ (note that $\epsilon_k\to0$ as $n\to\infty$ by assumption). Define $N_p=\sum_{k=1}^pn_k$ and $q_p=d_n\sum_{k=1}^p\epsilon_k$ for $p=0,\ldots,\nu$ ($N_0=q_0=0$). For $1\le j\le i\le\nu$, a direct computation shows that, for every $u,v=1,\ldots,d_n$, the $s\times t$ block in position $(u,v)$ of the $d_ns\times d_nt$ matrix $P_{n,i,j}$ is given by
\begin{align*}
(P_{n,i,j})_{uv}&=\left\{\begin{aligned}
&(A_n)_{uv}, &&\mbox{if $N_{i-1}<u\le N_i$ and $N_{j-1}<v\le N_j$,}\\
&O_{s,t}, &&\mbox{otherwise,}
\end{aligned}\right.
\end{align*}
and the $s\times t$ block in position $(u,v)$ of the $d_ns\times d_nt$ matrix $D_{d_n}(\psi_iI_s)A_nD_{d_n}(\psi_jI_t)$ is given by
\begin{align*}
(D_{d_n}(\psi_iI_s)A_nD_{d_n}(\psi_jI_t))_{uv}&=(D_{d_n}(\psi_iI_s))_{uu}(A_n)_{uv}(D_{d_n}(\psi_jI_t))_{vv}=\psi_i\Bigl(\frac u{d_n}\Bigr)I_s\,(A_n)_{uv}\,\psi_j\Bigl(\frac v{d_n}\Bigr)I_t\\
&=\left\{\begin{aligned}
&(A_n)_{uv}, &&\mbox{if $N_{i-1}+q_{i-1}<u<N_i+q_i$ and $N_{j-1}+q_{j-1}<v<N_j+q_j$,}\\
&O_{s,t}, &&\mbox{otherwise.}
\end{aligned}\right.
\end{align*}
Therefore, setting $R_{n,i,j}=P_{n,i,j}-D_{d_n}(\psi_iI_s)A_nD_{d_n}(\psi_jI_t)$, for every $u,v=1,\ldots,d_n$, the $s\times t$ block $(R_{n,i,j})_{uv}$ is certainly equal to zero in the following cases:
\begin{itemize}[nolistsep,leftmargin=*]
	\item $u$ belongs to
	\[ \bigl(\max(N_{i-1},N_{i-1}+q_{i-1}),\:\min(N_i,N_i+q_i)\bigr) \]
	and $v$ belongs to
	\[ \bigl(\max(N_{j-1},N_{j-1}+q_{j-1}),\:\min(N_j,N_j+q_j)\bigr); \]
	\item $u$ does not belong to 
	\[ \bigl[\min(N_{i-1},N_{i-1}+q_{i-1}),\:\max(N_i,N_i+q_i)\bigr] \]
	or $v$ does not belong to
	\[ \bigl[\min(N_{j-1},N_{j-1}+q_{j-1}),\:\max(N_j,N_j+q_j)\bigr]. \]
\end{itemize}
The number of integers $u$ belonging to $[\min(N_{i-1},N_{i-1}+q_{i-1}),\max(N_{i-1},N_{i-1}+q_{i-1})]$ is bounded by $|q_{i-1}|+1$ and, similarly, the number of integers $u$ belonging to $[\min(N_i,N_i+q_i),\max(N_i,N_i+q_i)]$ is bounded by $|q_i|+1$. Likewise, the number of integers $v$ belonging to $[\min(N_{j-1},N_{j-1}+q_{j-1}),\max(N_{j-1},N_{j-1}+q_{j-1})]$ is bounded by $|q_{j-1}|+1$ and, similarly, the number of integers $v$ belonging to $[\min(N_j,N_j+q_j),\max(N_j,N_j+q_j)]$ is bounded by $|q_j|+1$.
It follows that
\[ {\rm rank}(R_{n,i,j})\le s(|q_{i-1}|+|q_i|+2)+t(|q_{j-1}|+|q_j|+2)\le(s+t)\biggl(2d_n\sum_{k=1}^\nu|\epsilon_k|+2\biggr)=o(d_n), \]
and \eqref{tcp'}--\eqref{r-tcp'} are proved.

\medskip

\noindent
{\em Case 2.}
Now, we prove that $\{P_n^{GS}\}_n\sim_{\rm GLT}\kappa$ without imposing additional assumptions. Actually, there is nothing to prove, because the proof is this case is verbatim the same as the proof of Case~2 in Theorem~\ref{blockdiag(GLT)=GLT}, with the only difference that each occurrence of ``\,$P_n^J$\,'' must be replaced by ``\,$P_n^{GS}$\,''.
\end{proof}

\begin{remark}\label{bJp-efficiency}
Based on Theorems~\ref{blockdiag(GLT)=GLT}--\ref{blocktril(GLT)=GLT}, we can expect that block Jacobi/Gauss--Seidel preconditioning is efficient when applied to GLT sequences.
To see this, let us argue as follows. Let $\{A_n\}_n\sim_{\rm GLT}\kappa$, where $A_n$ is a square matrix of size $d_ns$ and $\kappa:[0,1]\times[-\pi,\pi]\to\mathbb C^{s\times s}$ is measurable, and let $(n_1,\ldots,n_\nu)=(n_1(n),\ldots,n_\nu(n))$ be a partition of $d_n$ with a fixed length $\nu$ independent of $n$.
By Theorems~\ref{blockdiag(GLT)=GLT}--\ref{blocktril(GLT)=GLT}, we have
\begin{equation}\label{pica1}
\Bigl\{P_n^J=\mathop{\rm blockdiag}_{n_1,\ldots,n_\nu}^{s,s}(A_n)\Bigr\}_n\sim_{\rm GLT}\kappa,\qquad\Bigl\{P_n^{GS}=\mathop{\rm blocktril}_{n_1,\ldots,n_\nu}^{s,s}(A_n)\Bigr\}_n\sim_{\rm GLT}\kappa.
\end{equation}
Assuming that the matrices $P_n^J$, $P_n^{GS}$ are invertible and $\kappa$ is invertible a.e., \eqref{pica1} and {\bf GLT3} yield
\begin{equation}\label{pica2}
\{(P_n^J)^{-1}A_n\}_n\sim_{\rm GLT}\kappa^{-1}\kappa=I_s,\qquad\{(P_n^{GS})^{-1}A_n\}_n\sim_{\rm GLT}\kappa^{-1}\kappa=I_s.
\end{equation}
The GLT relations \eqref{pica2} often imply the spectral distributions $\{(P_n^J)^{-1}A_n\}_n\sim_\lambda I_s$ and $\{(P_n^{GS})^{-1}A_n\}_n\sim_\lambda I_s$, which, by Definition~\ref{def-distribution}, are equivalent to
\begin{equation}\label{pica3}
\{(P_n^J)^{-1}A_n\}_n\sim_\lambda1,\qquad\{(P_n^{GS})^{-1}A_n\}_n\sim_\lambda1.
\end{equation}
For example, the spectral distribution $\{(P_n^J)^{-1}A_n\}_n\sim_\lambda1$ holds whenever the matrices $A_n$ are HPD and $\kappa$ is invertible a.e. In this case, the matrices $P_n^J$ are HPD and the spectral distribution $\{(P_n^J)^{-1}A_n\}_n\sim_\lambda1$ follows from 
Theorem~\ref{exe-preconditioning}.
What is important to point out is that the spectral distributions \eqref{pica3} are equivalent to the (weak) cluster at $1$ of the eigenvalues of $(P_n^J)^{-1}A_n$ and $(P_n^{GS})^{-1}A_n$; see \cite[Section~2.4.2]{bg} for details.
Thus, in view of the convergence properties of Krylov methods, we can expect that $P_n^J$ and $P_n^{GS}$ are efficient preconditioners for $A_n$.
Numerical experiments in support of this expectation are presented in Section~\ref{sec:ne}.
\end{remark}

\section{GLH sequences and the extended Fasino--Tilli theorem}\label{sec:GLH}

In this section, as an application of Theorem~\ref{blockdiag(GLT)=GLT}, we provide an alternative proof of the Fasino--Tilli theorem on the zero distribution of Hankel matrix sequences generated by $L^1$ functions; see Theorem~\ref{thm:FT}. This alternative proof naturally leads to both the definition of GLH sequences (Definition~\ref{GLH_def}) and the extension of the Fasino--Tilli theorem to such sequences; see Theorem~\ref{thm:FTextended}.
We first recall that the $n$th (block) Hankel matrix generated by a function $f:[-\pi,\pi]\to\mathbb C^{s\times t}$ belonging to $L^1([-\pi,\pi])$ is the $ns\times nt$ matrix denoted by $H_n(f)$ and defined as follows:
\[ H_n(f)=[f_{i+j-1}]_{i,j=1}^n=\left[\begin{array}{ccccc}f_1 & f_2 & f_3 & \cdots & f_n\\ f_2 & f_3 & & \iddots & f_{n+1}\\ f_3 & & \iddots & & f_{n+2}\\ \vdots & \iddots & & & \vdots\\ f_n & \cdots & \cdots & \cdots & f_{2n-1}\end{array}\right], \]
where $\{f_k\}_{k\in\mathbb Z}$ are the Fourier coefficients of $f$ defined in \eqref{Fc}.
Any sequence of matrices of the form $\{H_{d_n}(f)\}_n$, with $d_n\to\infty$ and $f:[-\pi,\pi]\to\mathbb C^{s\times t}$ in $L^1([-\pi,\pi])$, is referred to as a (block) Hankel sequence generated by $f$.
The following theorem is due to Fasino and Tilli \cite{FT}.

\begin{theorem}[\textbf{Fasino--Tilli theorem}]\label{thm:FT}
Let $f:[-\pi,\pi]\to\mathbb C^{s\times t}$ be in $L^1([-\pi,\pi])$. Then, $\{H_n(f)\}_n\sim_\sigma0$.
\end{theorem}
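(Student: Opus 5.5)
The plan is to realize $H_n(f)$, up to a permutation, as an off-diagonal block of a Toeplitz matrix. Then Theorem~\ref{blockdiag(GLT)=GLT} will force that block to be zero-distributed.

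\textbf{Embedding.} Consider the $2n\times 2n$ block Toeplitz matrix $T_{2n}(f)=[f_{i-j}]_{i,j=1}^{2n}$, partitioned with $(n_1,n_2)=(n,n)$. Its lower-left $n\times n$ block has $(i,j)$ entry $f_{n+i-j}$. Let $J_n$ be the $n\times n$ flip matrix and set $\mathcal J_s=J_n\otimes I_s$. Multiplying the lower-left block on the right by $\mathcal J_t$ replaces $j$ by $n+1-j$, giving entries $f_{i+j-1}$, i.e.\ exactly $H_n(f)$. Hence
\[ T_{2n}(f)-\mathop{\rm blockdiag}_{n,n}^{s,t}(T_{2n}(f))=\begin{bmatrix}O & B_n\\ H_n(f)\mathcal J_t & O\end{bmatrix}, \]
where $B_n$ is the upper-right block.

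\textbf{Zero distribution of the off-diagonal part.} By {\bf GLT2}, $\{T_{2n}(f)\}_n\sim_{\rm GLT}f(\theta)$, with $d_n=2n$. By Theorem~\ref{blockdiag(GLT)=GLT} with $\nu=2$, the block diagonal part has the same symbol. By {\bf GLT3} the difference $Z_n$ has symbol $O_{s,t}$, so {\bf GLT2} gives $\{Z_n\}_n\sim_\sigma0$.

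\textbf{Passing to $H_n(f)$.} The singular values of $Z_n$ are the union of those of $H_n(f)\mathcal J_t$ and of $B_n$, since $Z_n$ is a block-antidiagonal direct sum up to a permutation. Right multiplication by the unitary $\mathcal J_t$ does not change singular values. There are $ns\wedge nt$ singular values from each piece out of $2(ns\wedge nt)$ in total.

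Take $F\in C_c(\mathbb R)$ with $0\le F\le F(0)=1$, and consider
\[ \frac{1}{ns\wedge nt}\sum_i\bigl(1-F(\sigma_i(H_n(f)))\bigr). \]
This is at most twice the corresponding average for $Z_n$, which tends to $1-F(0)=0$. Since every $G\in C_c(\mathbb R)$ satisfies $|G(x)-G(0)|\le\epsilon+C\,(1-F(x))$ for a suitable bump $F$, we get $\{H_n(f)\}_n\sim_\sigma0$. Equivalently, one can invoke the standard characterization that zero distribution means that for every $\epsilon$ the fraction of singular values exceeding $\epsilon$ tends to $0$.

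\textbf{Main obstacle.} The main step to check carefully is this last one: transferring zero distribution from the $2\times2$ block matrix to its single block $H_n(f)$. The other delicate point is verifying the index bookkeeping of the flip that turns the Toeplitz block into $H_n(f)$.
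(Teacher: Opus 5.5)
Your proposal is correct and follows essentially the same route as the paper: the same embedding of $H_n(f)(W_n\otimes I_t)$ as the lower-left block of $T_{2n}(f)$, then Theorem~\ref{blockdiag(GLT)=GLT} with $\nu=2$ together with \textbf{GLT2}--\textbf{GLT3} to show that $T_{2n}(f)-\mathop{\rm blockdiag}_{n,n}^{s,t}(T_{2n}(f))$ is zero-distributed. The only difference is the extraction step: the paper invokes the compression result Lemma~\ref{lemma_sigma}, whereas you argue directly that the singular values of the off-diagonal part are the union of those of its two blocks and compare averages of $1-F$ for a bump $F$, which is valid and entirely elementary here.
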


The proof of Theorem~\ref{thm:FT} that we are going to see is based on Theorem~\ref{blockdiag(GLT)=GLT} and is different from the original one appeared in~\cite{FT}.
In addition to Theorem~\ref{blockdiag(GLT)=GLT}, the proof requires Lemma~\ref{lemma_sigma} \cite[Theorem~3.10]{blocking}.
Throughout this paper, we denote by $W_n$ the $n\times n$ anti-identity matrix (or flip matrix) defined by
\begin{equation}\label{flipm}
W_n=\begin{bmatrix*}
&&&&1\\
&&&1\\
&&\iddots\\
&1\\
1
\end{bmatrix*}.
\end{equation}
A matrix $P\in\mathbb C^{m\times n}$ is said to be a compression matrix if $m\ge n$ and $P^*P=I_n$.

\begin{lemma}\label{lemma_sigma}
Let $\{X_n\}_n$ be a sequence of matrices with $X_n$ of size $d_n\times e_n$, and let $\{P_n\}_n$, $\{P_n'\}_n$ be sequences of compression matrices such that $P_n\in\mathbb C^{d_n\times\delta_n}$, $P_n'\in\mathbb C^{e_n\times\epsilon_n}$, and
\begin{equation*}
\liminf_{n\to\infty}\frac{\delta_n}{d_n}>0,\qquad\liminf_{n\to\infty}\frac{\epsilon_n}{e_n}>0.
\end{equation*}
If $\{X_n\}_n\sim_\sigma0$ then $\{P_n^*X_nP_n'\}_n\sim_\sigma0$.
\end{lemma}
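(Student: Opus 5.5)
Since the quantifier ranges over all $F\in C_c(\mathbb R)$, I would prove Lemma~\ref{lemma_sigma} through a characterization of zero-distributed sequences that never mentions $F$. The one I intend to use is the splitting characterization: $\{X_n\}_n\sim_\sigma0$ if and only if, for every $n$, we can write $X_n=R_n+N_n$ with
\[ \frac{{\rm rank}(R_n)}{d_n\wedge e_n}\to0,\qquad\|N_n\|\to0. \]
This is the standard description of zero-distributed sequences in the theory (equivalently: for every $\epsilon>0$, the fraction of singular values exceeding $\epsilon$ tends to zero). If it is not quoted, I would justify it directly. One direction applies $F$ equal to $1$ near $0$ and $0$ outside a small neighbourhood, which forces the proportion of singular values larger than $\epsilon$ to vanish. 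The other direction uses the truncated SVD: keep the large singular values as $R_n$ and the small ones as $N_n$.

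Given this decomposition of $X_n$, I would set
\[ P_n^*X_nP_n'=P_n^*R_nP_n'+P_n^*N_nP_n'. \]
The target size is $\delta_n\times\epsilon_n$, so I need to control each piece relative to $\delta_n\wedge\epsilon_n$.

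For the norm part, compression matrices satisfy $\|P_n\|=\|P_n'\|=1$, since $P^*P=I$ makes them isometries. Hence $\|P_n^*N_nP_n'\|\le\|N_n\|\to0$.

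For the rank part, ${\rm rank}(P_n^*R_nP_n')\le{\rm rank}(R_n)=o(d_n\wedge e_n)$. The remaining task is to compare $d_n\wedge e_n$ with $\delta_n\wedge\epsilon_n$. By the liminf hypotheses there is a constant $c>0$ with $\delta_n\ge cd_n$ and $\epsilon_n\ge ce_n$ for large $n$, so $\delta_n\wedge\epsilon_n\ge c(d_n\wedge e_n)$. It follows that ${\rm rank}(P_n^*R_nP_n')/(\delta_n\wedge\epsilon_n)\to0$. Note also that $\delta_n,\epsilon_n\to\infty$, so $\{P_n^*X_nP_n'\}_n$ is a legitimate sequence of matrices.

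Applying the characterization in the reverse direction then gives $\{P_n^*X_nP_n'\}_n\sim_\sigma0$. The only real obstacle is securing the rank-plus-small-norm characterization of zero-distributed sequences. If that is unavailable, the alternative route is Cauchy interlacing for singular values of submatrices after a unitary completion of $P_n$ and $P_n'$. That route gives $\sigma_i(P_n^*X_nP_n')\le\sigma_i(X_n)$, so the number of singular values above $\epsilon$ can only decrease, and the ratio of dimensions is again controlled by the liminf assumption.
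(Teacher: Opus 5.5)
Your proof is correct, but there is no in-paper argument to compare it with: the paper does not prove this lemma and simply imports it from \cite[Theorem~3.10]{blocking}. It then uses the lemma to extract the off-diagonal block $K_n(f)$ from \eqref{final_alfa}. Your rank-plus-small-norm argument is therefore a self-contained replacement for that citation.

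What your route buys is transparency about the hypotheses. The compression property is used only through $\|P_n\|=\|P_n'\|=1$, so uniformly bounded norms would already suffice. The two $\liminf$ conditions serve only to turn ${\rm rank}(R_n)=o(d_n\wedge e_n)$ into $o(\delta_n\wedge\epsilon_n)$, via $\delta_n\wedge\epsilon_n\ge c\,(d_n\wedge e_n)$. Your fallback route is even shorter, and it needs no unitary completion: $\sigma_i(P_n^*X_nP_n')\le\|P_n^*\|\,\sigma_i(X_n)\,\|P_n'\|=\sigma_i(X_n)$. Hence, for each threshold $\eta>0$, the number of singular values above $\eta$ can only drop.

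One small caveat concerns your sketched justification of the splitting characterization. Both steps you describe, the test function $F$ and then the truncated SVD, prove only the direction ``$\sim_\sigma0\Rightarrow$ splitting''. To get a single $N_n$ with $\|N_n\|\to0$, you also need thresholds $\eta_n\to0$ chosen by a diagonal argument; alternatively, work at a fixed $\eta$ throughout.

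The direction you actually invoke at the end, ``splitting $\Rightarrow\sim_\sigma0$'', needs a different ingredient. Weyl's inequality gives $\sigma_{r+1}(R_n+N_n)\le\|N_n\|$ whenever ${\rm rank}(R_n)\le r$, so at most ${\rm rank}(R_n)$ singular values exceed $\|N_n\|$. Uniform continuity of $F$ then gives $\bigl|\frac1k\sum_iF(\sigma_i)-F(0)\bigr|\le\omega_F(\|N_n\|)+2\|F\|_\infty{\rm rank}(R_n)/k$. Here $\omega_F$ is the modulus of continuity of $F$ and $k$ is the smaller matrix dimension. This step is standard and quotable, so it is an omission in exposition rather than a real gap.
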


\begin{proof}[Proof of Theorem~{\rm\ref{thm:FT}}]
Consider the matrix
\begin{align}
T_{2n}(f)&=[f_{i-j}]_{i,j=1}^{2n}=\begin{bmatrix}
f_0 & f_{-1} & f_{-2} & \cdots & \cdots & \cdots & f_{-(2n-1)}\\
f_1 & f_0 & f_{-1} & f_{-2} & & & \vdots \\
f_2 & f_1 & f_0 & f_{-1} & f_{-2} & & \vdots\\
\vdots & \ddots & \ddots & \ddots & \ddots & \ddots & \vdots\\
\vdots & & f_2 & f_1 & f_0 & f_{-1} & f_{-2}\\
\vdots & & & f_2 & f_1 & f_0 & f_{-1}\\[5pt]
f_{2n-1} & \cdots & \cdots & \cdots & f_2 & f_1 & f_0
\end{bmatrix}\notag\\
&=\left[\begin{array}{c|c}
[f_{i-j}]_{i,j=1}^n & [f_{i-j}]_{i=1,\ldots,n}^{j=n+1,\ldots,2n}\vphantom{\Big|}\\
\hline
[f_{i-j}]_{i=n+1,\ldots,2n}^{j=1,\ldots,n} & [f_{i-j}]_{i,j=n+1}^{2n}\vphantom{\Big|}
\end{array}\right]
=\left[\begin{array}{c|c}
[f_{i-j}]_{i,j=1}^n & [f_{i-j-n}]_{i,j=1}^n\vphantom{\Big|}\\
\hline
[f_{n+i-j}]_{i,j=1}^n & [f_{i-j}]_{i,j=1}^{n}\vphantom{\Big|}
\end{array}\right]\notag\\
&=\left[\begin{array}{c|c}
T_n(f) & J_n(f)\vphantom{\Big|}\\
\hline
K_n(f) & T_n(f)\vphantom{\Big|}
\end{array}\right],\label{T2n-Tn}
\end{align}
where
\begin{align}
J_n(f)&=[f_{i-j-n}]_{i,j=1}^n=\left[\begin{array}{ccccc}
f_{-n} & \cdots & \cdots & \cdots & f_{-(2n-1)}\\
\vdots & \ddots & & & \vdots\\
f_{-3} & & \ddots & & f_{-(n+2)}\\
f_{-2} & f_{-3} & & \ddots & f_{-(n+1)}\\[5pt]
f_{-1} & f_{-2} & f_{-3} & \cdots & f_{-n}
\end{array}\right],\notag\\[3pt]
K_n(f)&=[f_{n+i-j}]_{i,j=1}^n=\left[\begin{array}{ccccc}
f_n & \cdots & f_3 & f_2 & f_1\\
f_{n+1} & \ddots & & f_3 & f_2\\
f_{n+2} & & \ddots & & f_3\\
\vdots & & & \ddots & \vdots\\
f_{2n-1} & \cdots & \cdots & \cdots & f_n
\end{array}\right]=H_n(f)(W_n\otimes I_t).\label{K=H}
\end{align}
Note that
\[ \mathop{\rm blockdiag}_{n,n}^{s,t}(T_{2n}(f))=T_n(f)\oplus T_n(f)=\left[\begin{array}{c|c}
T_n(f) & O\vphantom{\Big|}\\
\hline
O & T_n(f)\vphantom{\Big|}
\end{array}\right] \]
is the ``block diagonal part'' of $T_{2n}(f)$ as it appears in \eqref{T2n-Tn}.
By Theorem~\ref{blockdiag(GLT)=GLT} and {\bf GLT3}, we have
\[ \Bigl\{T_{2n}(f)-\mathop{\rm blockdiag}_{n,n}^{s,t}(T_{2n}(f))\Bigr\}_n\sim_{\rm GLT}O_{s,t}. \]
Hence, by {\bf GLT2},
\begin{equation*}
\Bigl\{T_{2n}(f)-\mathop{\rm blockdiag}_{n,n}^{s,t}(T_{2n}(f))\Bigr\}_n\sim_\sigma0,
\end{equation*}
i.e.,
\begin{equation}\label{final_alfa}
\left\{\left[\begin{array}{c|c}
O & J_n(f)\vphantom{\Big|}\\
\hline
K_n(f) & O\vphantom{\Big|}
\end{array}\right]\right\}_n\sim_\sigma0.
\end{equation}
It follows from \eqref{final_alfa} that
\begin{equation}\label{KJ0}
\{K_n(f)\}_n\sim_\sigma0.
\end{equation}
To see this, apply Lemma~\ref{lemma_sigma} with
\[ X_n=\left[\begin{array}{c|c}
O & J_n(f)\vphantom{\Big|}\\
\hline
K_n(f) & O\vphantom{\Big|}
\end{array}\right],\qquad
P_n=\left[\begin{array}{c}O_n\vphantom{\Big|}\\\hline I_n\vphantom{\Big|}\end{array}\right]\otimes I_s=\left[\begin{array}{c}O_{ns}\vphantom{\Big|}\\\hline I_{ns}\vphantom{\Big|}\end{array}\right],\qquad P_n'=\left[\begin{array}{c}I_n\vphantom{\Big|}\\\hline O_n\vphantom{\Big|}\end{array}\right]\otimes I_t=\left[\begin{array}{c}I_{nt}\vphantom{\Big|}\\\hline O_{nt}\vphantom{\Big|}\end{array}\right], \]
after observing that $P_n^*X_nP_n'=K_n(f)$.
The singular value distribution \eqref{KJ0} immediately implies the thesis $\{H_n(f)\}_n\sim_\sigma0$, because $H_n(f)$ coincides with $K_n(f)$ up to the permutation matrix $W_n\otimes I_t$ appearing in \eqref{K=H}, and so $H_n(f)$ and $K_n(f)$ have the same singular values.
\end{proof}

Our proof of Theorem~\ref{thm:FT} amounts to applying Theorem~\ref{blockdiag(GLT)=GLT} and Lemma~\ref{lemma_sigma}, after observing that the bottom left block $K_n(f)$ of the matrix $T_{2n}(f)-\mathop{\rm blockdiag}_{n,n}^{s,t}(T_{2n}(f))$ coincides with the Hankel matrix $H_n(f)$ multiplied by the permutation $W_n\otimes I_t$; see \eqref{K=H}.
In the case where the Toeplitz sequence $\{T_{2n}(f)\}_n$ is replaced by a GLT sequence $\{A_n\}_n\sim_{\rm GLT}\kappa$, with $A_n$ of size $2ns\times 2nt$ just like $T_{2n}(f)$, the bottom left block $K_n$ of the matrix $A_n-\mathop{\rm blockdiag}_{n,n}^{s,t}(A_n)$ obviously coincides with some matrix $H_n$ multiplied by the permutation $W_n\otimes I_t$; it suffices to take $H_n=K_n(W_n\otimes I_t)$. In this scenario, $\{H_n\}_n$ is referred to as a GLH sequence.

\begin{definition}[\textbf{generalized locally Hankel sequence}]\label{GLH_def}
Let $\{A_n\}_n\sim_{\rm GLT}\kappa$, where $\kappa:[0,1]\times[-\pi,\pi]\to\mathbb C^{s\times t}$ is measurable and $A_n$ has size $2ns\times 2nt$ for some fixed positive integers $s,t$. For every $n$, write
\[ A_n-\mathop{\rm blockdiag}_{n,n}^{s,t}(A_n)=\left[\begin{array}{c|c}
O & J_n\vphantom{\Big|}\\
\hline
K_n & O\vphantom{\Big|}
\end{array}\right] \]
and define $H_n=K_n(W_n\otimes I_t)$. Then, $\{H_n\}_n$ is by definition a generalized locally Hankel (GLH) sequence. 
In view of \eqref{K=H}, we have $H_n=H_n(f)$ in the case where $A_n=T_{2n}(f)$.
\end{definition}

Through a simple adaptation of the proof of Theorem~\ref{thm:FT}, 
one can show that any GLH sequence is zero-distributed (and hence it is a GLT sequence with zero symbol by {\bf GLT2}). We can therefore state the following result, which is an extension of Theorem~\ref{thm:FT}.

\begin{theorem}[\textbf{extended Fasino--Tilli theorem}]\label{thm:FTextended}
Let $\{H_n\}_n$ be a GLH sequence as per Definition~{\rm\ref{GLH_def}}. Then, $\{H_n\}_n\sim_\sigma0$ and $\{H_n\}_n\sim_{\rm GLT}O_{s,t}$.
\end{theorem}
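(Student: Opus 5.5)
The plan is to reproduce the proof of Theorem~\ref{thm:FT}, replacing $T_{2n}(f)$ by the GLT sequence $\{A_n\}_n\sim_{\rm GLT}\kappa$ of Definition~\ref{GLH_def}. Here $A_n$ has size $2ns\times 2nt$, so it fits the setting of Theorem~\ref{blockdiag(GLT)=GLT} with $d_n=2n$, $\nu=2$ and the partition $(n_1,n_2)=(n,n)$. That theorem gives
\[ \Bigl\{\mathop{\rm blockdiag}_{n,n}^{s,t}(A_n)\Bigr\}_n\sim_{\rm GLT}\kappa. \]
The linearity item of {\bf GLT3} then yields
\[ \Bigl\{A_n-\mathop{\rm blockdiag}_{n,n}^{s,t}(A_n)\Bigr\}_n\sim_{\rm GLT}\kappa-\kappa=O_{s,t}. \]
The third item of {\bf GLT2} converts this into the zero singular value distribution
\[ \left\{\left[\begin{array}{c|c} O & J_n\vphantom{\Big|}\\ \hline K_n & O\vphantom{\Big|} \end{array}\right]\right\}_n\sim_\sigma0. \]

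Next I would extract the bottom-left block with Lemma~\ref{lemma_sigma}, using the same compression matrices as in the proof of Theorem~\ref{thm:FT}:
\[ P_n=\left[\begin{array}{c}O_{ns}\\ \hline I_{ns}\end{array}\right],\qquad P_n'=\left[\begin{array}{c}I_{nt}\\ \hline O_{nt}\end{array}\right]. \]
These satisfy $P_n^*P_n=I_{ns}$ and $P_n'^*P_n'=I_{nt}$, and the ratios $\delta_n/d_n=ns/(2ns)=1/2$ and $\epsilon_n/e_n=1/2$ are bounded away from zero. Lemma~\ref{lemma_sigma} therefore gives $\{K_n\}_n\sim_\sigma0$.

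Since $W_n\otimes I_t$ is a permutation matrix, hence unitary, $H_n=K_n(W_n\otimes I_t)$ has the same singular values as $K_n$, so $\{H_n\}_n\sim_\sigma0$. The matrix $H_n$ has size $ns\times nt$, which is of the form $d_n's\times d_n't$ with $d_n'=n\to\infty$. The \emph{if} direction of the third item of {\bf GLT2} then yields $\{H_n\}_n\sim_{\rm GLT}O_{s,t}$.

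There is no substantial obstacle, since the argument simply transplants the proof of Theorem~\ref{thm:FT}. The only points to check are bookkeeping: the block-size conventions ($s\times t$ blocks, $d_n=2n$) must match the hypotheses of Theorem~\ref{blockdiag(GLT)=GLT}, and the sizes of $H_n$ must match the format required by {\bf GLT2}.
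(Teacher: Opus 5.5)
Your proof is correct and is exactly the ``simple adaptation'' of the proof of Theorem~\ref{thm:FT} that the paper invokes. The chain is the same: Theorem~\ref{blockdiag(GLT)=GLT} with $d_n=2n$ and the partition $(n,n)$; then \textbf{GLT3} and \textbf{GLT2} to make the off-diagonal part zero-distributed; then Lemma~\ref{lemma_sigma} with the same compression matrices to extract $K_n$; then invariance of singular values under the permutation $W_n\otimes I_t$; and finally the third item of \textbf{GLT2} to conclude $\{H_n\}_n\sim_{\rm GLT}O_{s,t}$.
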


\section{Numerical experiments}\label{sec:ne}

In this section, we illustrate through numerical examples the efficiency of block Jacobi and block Gauss--Seidel preconditioners for GLT sequences, as predicted by Remark~\ref{bJp-efficiency}.
In each example, we proceed as follows.
\begin{enumerate}[nolistsep,leftmargin=20pt]
	\item[(A)] We fix a GLT sequence $\{A_n\}_n\sim_{\rm GLT}\kappa$, where $A_n$ is an invertible square matrix of size $ns$ for some fixed positive integer $s$ and $\kappa:[0,1]\times[-\pi,\pi]\to\mathbb C^{s\times s}$ is a measurable function.
	\item[(B)] For every $\nu\ge2$ and every $n\ge\nu$, we define $(n_1,\ldots,n_\nu)$ as the partition of $n$ given by
	\[ n_i=\lfloor n/\nu\rfloor,\qquad i=1,\ldots,\nu-1,\qquad n_\nu=n-(\nu-1)\lfloor n/\nu\rfloor, \]
	and we consider the corresponding block Jacobi and block Gauss--Seidel preconditioners for $A_n$, denoted by $P_n^J=\mathop{\rm blockdiag}_{n_1,\ldots,n_\nu}^{s,s}(A_n)$ and $P_n^{GS}=\mathop{\rm blocktril}_{n_1,\ldots,n_\nu}^{s,s}(A_n)$, respectively. Both $P_n^J$ and $P_n^{GS}$ will turn out to be invertible just like $A_n$.
	\item[(C)] In the case where the matrices $A_n$ are HPD, the matrices $P_n^J$ are HPD as well and we report in a table, for increasing values of $\nu$ and $n$, the number of iterations needed by the preconditioned conjugate gradient (PCG) method with preconditioner $P_n^J$ for solving the linear system $A_n\xx=\mathbf1$ up to a precision of $10^{-6}$, where $\mathbf1$ is the vector of all ones. The PCG method is started with the zero vector $\mathbf0$. We also report in another table, for the same values of $\nu$ and $n$ considered in the previous table, the number of iterations needed by the preconditioned generalized minimal residual (PGMRES) method with preconditioner $P_n^{GS}$ for solving the linear system $A_n\xx=\mathbf1$ up to a precision of $10^{-6}$. The PGMRES method is started with the zero vector $\mathbf0$ and applied without restarting.
	\item[(D)] In the case where the matrices $A_n$ are not HPD, we report in a table, for increasing values of $\nu$ and $n$, the number of iterations needed by the PGMRES method with preconditioner $P_n^J$ for solving the linear system $A_n\xx=\mathbf1$ up to a precision of $10^{-6}$. We also report in another table, for the same values of $\nu$ and $n$ considered in the previous table, the number of iterations needed by the PGMRES method with preconditioner $P_n^{GS}$ for solving the linear system $A_n\xx=\mathbf1$ up to a precision of $10^{-6}$. As in item~(C), the PGMRES method is started with the zero vector $\mathbf0$ and applied without restarting.
\end{enumerate}
In all the considered examples, for each fixed $\nu$, the number of PCG/PGMRES iterations is bounded from above by a constant $C_\nu$ independent of $n$. This shows an optimal convergence rate of both methods and, consequently, the efficiency of the preconditioners $P_n^J$ and $P_n^{GS}$ for $A_n$.

\begin{example}[\textbf{full Toeplitz matrices}]\label{e1}
Let $A_n=T_n(f)$ with $f(\theta)=|\theta|$.
In this case, we have $\{A_n\}_n\sim_{\rm GLT}f$ by {\bf GLT2}. The number $s$ in item~(A) is given by $s=1$.
Since the matrices $A_n$ are HPD by well-known properties of Toeplitz matrices---see, e.g., \cite[eq.~(6.9) and Theorem~6.1]{GLTbookI}---we follow the program in item~(C). The results are collected in Tables~\ref{e1T}--\ref{e1T'}.
\begin{table}
\footnotesize
\centering
\caption{Example~\ref{e1} --- Number of PCG iterations with preconditioner $P_n^J$ for solving $A_n\xx=\mathbf1$ up to a precision of $10^{-6}$.}
\label{e1T}
\begin{tabular}{rrrrrrrrr}
\toprule
 & \multicolumn{8}{c}{PCG iterations with preconditioner $P_n^J$$_{\vphantom{\int}}$}\\
\cline{2-9}
$n$ & $\hphantom{100}\nu=2$ & $\hphantom{100}\nu=3$ & $\hphantom{100}\nu=4$ & $\hphantom{100}\nu=5$ & $\hphantom{100}\nu=6$ & $\hphantom{100}\nu=7$ & $\hphantom{100}\nu=8$ & $\hphantom{100}\nu=9^{\vphantom{\int^0}}$\\
\midrule
40 & 5 & 9 & 8 & 9 & 14 & 15 & 12 & 16\\
80 & 6 & 10 & 9 & 10 & 15 & 16 & 13 & 18\\
160 & 6 & 10 & 10 & 11 & 16 & 18 & 14 & 19\\
320 & 6 & 11 & 10 & 12 & 17 & 19 & 16 & 21\\
640 & 7 & 12 & 11 & 12 & 18 & 19 & 17 & 20\\
1280 & 7 & 12 & 12 & 13 & 19 & 20 & 18 & 22\\
2560 & 7 & 13 & 12 & 14 & 20 & 21 & 19 & 23\\
\bottomrule
\end{tabular}
\vspace{20pt}
\caption{Example~\ref{e1} --- Number of PGMRES iterations with preconditioner $P_n^{GS}$ for solving $A_n\xx=\mathbf1$ up to a precision of $10^{-6}$.}
\label{e1T'}
\begin{tabular}{rrrrrrrrr}
\toprule
 & \multicolumn{8}{c}{PGMRES iterations with  preconditioner $P_n^{GS}$$_{\vphantom{\int}}$}\\
\cline{2-9}
$n$ & $\hphantom{100}\nu=2$ & $\hphantom{100}\nu=3$ & $\hphantom{100}\nu=4$ & $\hphantom{100}\nu=5$ & $\hphantom{100}\nu=6$ & $\hphantom{100}\nu=7$ & $\hphantom{100}\nu=8$ & $\hphantom{100}\nu=9^{\vphantom{\int^0}}$\\
\midrule
40 & 4 & 6 & 7 & 8 & 9 & 9 & 10 & 10\\
80 & 4 & 6 & 7 & 9 & 10 & 10 & 11 & 11\\
160 & 4 & 6 & 8 & 9 & 11 & 11 & 12 & 13\\
320 & 5 & 7 & 8 & 10 & 11 & 12 & 13 & 13\\
640 & 5 & 7 & 9 & 10 & 12 & 12 & 13 & 14\\
1280 & 5 & 7 & 9 & 11 & 12 & 13 & 13 & 15\\
2560 & 5 & 8 & 9 & 11 & 12 & 13 & 14 & 15\\
\bottomrule
\end{tabular}
\end{table}
\end{example}

\begin{example}[\textbf{preconditioned block Toeplitz matrices}]\label{e2}
Let $A_n=T_n(h)^{-1}T_n(f)$, where
\[ f(\theta)=\frac13\begin{bmatrix}4 & -2-2\hspace{1pt}{\rm e}^{{\rm i}\theta}\\[3pt] -2-2\hspace{1pt}{\rm e}^{-{\rm i}\theta} & 8-4\cos\theta\end{bmatrix},\qquad h(\theta)=\frac1{30}\begin{bmatrix}4 & 3+3\hspace{1pt}{\rm e}^{{\rm i}\theta}\\[3pt] 3+3\hspace{1pt}{\rm e}^{-{\rm i}\theta} & 12+2\cos\theta\end{bmatrix}. \]
The choice of $f$ and $h$ is inspired by the quadratic $C^0$ B-spline Galerkin discretization of the one-dimensional Laplacian eigenvalue problem; see \cite[Section~2.3.2]{Tom-paper}.
Since $h(\theta)$ is HPD for every $\theta\in[-\pi,\pi]$, we have $\{A_n\}_n\sim_{\rm GLT}h^{-1}f$ by {\bf GLT2}--{\bf GLT3}. The number $s$ in item~(A) is given by $s=2$.
Since the matrices $A_n$ are not HPD, we follow the program in item~(D). The results are collected in Tables~\ref{e2T}--\ref{e2T'}.
\begin{table}
\footnotesize
\centering
\caption{Example~\ref{e2} --- Number of PGMRES iterations with preconditioner $P_n^J$ for solving $A_n\xx=\mathbf1$ up to a precision of $10^{-6}$.}
\label{e2T}
\begin{tabular}{rrrrr}
\toprule
 & \multicolumn{4}{c}{PGMRES iterations with preconditioner $P_n^J$$_{\vphantom{\int}}$}\\
\cline{2-5}
$n$ & $\hphantom{100}\nu=2$ & $\hphantom{100}\nu=4$ & $\hphantom{100}\nu=8$ & $\hphantom{100}\nu=16^{\vphantom{\int^0}}$\\
\midrule
40 & 4 & 10 & 22 & 40\\
80 & 4 & 10 & 22 & 43\\
160 & 4 & 10 & 22 & 43\\
320 & 4 & 10 & 22 & 44\\
640 & 4 & 8 & 19 & 44\\
1280 & 4 & 8 & 16 & 35\\
2560 & 4 & 7 & 13 & 27\\
\bottomrule
\end{tabular}
\vspace{20pt}
\centering
\caption{Example~\ref{e2} --- Number of PGMRES iterations with preconditioner $P_n^{GS}$ for solving $A_n\xx=\mathbf1$ up to a precision of $10^{-6}$.}
\label{e2T'}
\begin{tabular}{rrrrr}
\toprule
 & \multicolumn{4}{c}{PGMRES iterations with preconditioner $P_n^{GS}$$_{\vphantom{\int}}$}\\
\cline{2-5}
$n$ & $\hphantom{100}\nu=2$ & $\hphantom{100}\nu=4$ & $\hphantom{100}\nu=8$ & $\hphantom{100}\nu=16^{\vphantom{\int^0}}$\\
\midrule
40 & 2 & 4 & 8 & 17\\
80 & 2 & 4 & 9 & 17\\
160 & 2 & 4 & 9 & 17\\
320 & 2 & 4 & 9 & 18\\
640 & 2 & 5 & 9 & 18\\
1280 & 2 & 5 & 9 & 19\\
2560 & 2 & 5 & 10 & 19\\
\bottomrule
\end{tabular}
\end{table}
\end{example}

\begin{example}[\textbf{finite element matrices}]\label{e3}
Let $a:[0,1]\to\mathbb R$ be a function in $L^1([0,1])$ with $a>0$ a.e., let $\varphi_1,\ldots,\varphi_n:[0,1]\to\mathbb R$ be the so-called ``hat-functions'' defined on the uniform grid $x_i=\frac i{n+1^{\vphantom{\int}}}$, $i=0,\ldots,n+1$, i.e.,
\[ \varphi_i(x)=\frac{x-x_{i-1}}{x_i-x_{i-1}}\hspace{1pt}\chi_{[x_{i-1},x_i)}(x)+\frac{x_{i+1}-x}{x_{i+1}-x_i}\hspace{1pt}\chi_{[x_i,x_{i+1})}(x),\qquad i=1,\ldots,n, \]
let
\begin{align*}
K_n&=\left[\int_0^1a(x)\varphi_j'(x)\varphi_i'(x){\rm d}x\right]_{i,j=1}^n,\\
H_n&=\left[\int_0^1\varphi_j'(x)\varphi_i(x){\rm d}x\right]_{i,j=1}^n=-{\rm i}\,T_n(\sin\theta),\\
M_n&=\left[\int_0^1\varphi_j(x)\varphi_i(x){\rm d}x\right]_{i,j=1}^n=\frac{1}{3(n+1)}\,T_n(2+\cos\theta),
\end{align*}
and let
\[ A_n = (n+1)(M_n + H_n^*K_n^{-1}H_n). \]
The matrix $A_n$ arises from the finite element discretization of a system of differential equations, it is HPD for all $n$ (due to the assumption that $a>0$ a.e.), and we have
\[ \{A_n\}_n\sim_{\rm GLT}\kappa(x,\theta)=\frac{2+\cos\theta}3+\frac{1+\cos\theta}{2\hspace{1pt}a(x)}; \]
see \cite[Section~10.6.2]{GLTbookI} for details. The number $s$ in item~(A) is given by $s=1$.
Since the matrices $A_n$ are HPD, we follow the program in item~(C). The results are collected in Tables~\ref{e3T}--\ref{e3T'}.
\begin{table}
\footnotesize
\centering
\caption{Example~\ref{e3} --- Number of PCG iterations with preconditioner $P_n^J$ for solving $A_n\xx=\mathbf1$ up to a precision of $10^{-6}$ in the case where $a(x)=1+\sqrt x$.}
\label{e3T}
\begin{tabular}{rrrrr}
\toprule
 & \multicolumn{4}{c}{PCG iterations with preconditioner $P_n^J$$_{\vphantom{\int}}$}\\
\cline{2-5}
$n$ & $\hphantom{100}\nu=2$ & $\hphantom{100}\nu=7$ & $\hphantom{100}\nu=12$ & $\hphantom{100}\nu=17^{\vphantom{\int^0}}$\\
\midrule
40 & 5 & 7 & 8 & 9\\
80 & 4 & 7 & 7 & 8\\
160 & 5 & 7 & 6 & 6\\
320 & 5 & 7 & 6 & 6\\
640 & 5 & 7 & 6 & 6\\
1280 & 5 & 7 & 6 & 6\\
2560 & 5 & 7 & 6 & 6\\
\bottomrule
\end{tabular}
\vspace{20pt}
\centering
\caption{Example~\ref{e3} --- Number of PGMRES iterations with preconditioner $P_n^{GS}$ for solving $A_n\xx=\mathbf1$ up to a precision of $10^{-6}$ in the case where $a(x)=1+\sqrt x$.}
\label{e3T'}
\begin{tabular}{rrrrr}
\toprule
 & \multicolumn{4}{c}{PGMRES iterations with preconditioner $P_n^{GS}$$_{\vphantom{\int}}$}\\
\cline{2-5}
$n$ & $\hphantom{100}\nu=2$ & $\hphantom{100}\nu=7$ & $\hphantom{100}\nu=12$ & $\hphantom{100}\nu=17^{\vphantom{\int^0}}$\\
\midrule
40 & 3 & 6 & 6 & 7\\
80 & 3 & 6 & 6 & 6\\
160 & 3 & 6 & 6 & 6\\
320 & 3 & 6 & 6 & 6\\
640 & 3 & 6 & 6 & 6\\
1280 & 3 & 5 & 6 & 6\\
2560 & 3 & 5 & 6 & 6\\
\bottomrule
\end{tabular}
\end{table}
\end{example}

\section{Conclusions and perspectives}\label{sec:conc}
In our main results (Theorems~\ref{blockdiag(GLT)=GLT}--\ref{blocktril(GLT)=GLT}), we have proved that any sequence of block Jacobi or block Gauss--Seidel preconditioners $\{P_n\}_n$ associated with a GLT sequence $\{A_n\}_n\sim_{\rm GLT}\kappa$ is again a GLT sequence with the same symbol $\kappa$, as long as the number of blocks appearing in each preconditioner $P_n$ is a fixed number $\nu$ independent of~$n$.
As a consequence of this result:
\begin{enumerate}[nolistsep,leftmargin=18.5pt]
	\item[(a)] in Remark~\ref{bJp-efficiency}, we have predicted the efficiency of block Jacobi and block Gauss--Seidel preconditioners for GLT sequences; an efficiency that has been illustrated through numerical experiments in Section~\ref{sec:ne};
	\item[(b)] in Theorem~\ref{thm:FTextended}, we have extended the Fasino--Tilli theorem on the zero distribution of Hankel sequences generated by $L^1$ functions to the larger class of GLH sequences defined in Definition~\ref{GLH_def}.
\end{enumerate}
We conclude this paper by suggesting some possible future lines of research.
\begin{itemize}[nolistsep,leftmargin=*]
	\item In Theorems~\ref{blockdiag(GLT)=GLT}--\ref{blocktril(GLT)=GLT}, the considered block Jacobi and block Gauss--Seidel preconditioners have a fixed number $\nu$ of blocks independent of~$n$. From a theoretical point of view, it could be interesting to investigate whether Theorems~\ref{blockdiag(GLT)=GLT}--\ref{blocktril(GLT)=GLT} continue to hold if $\nu=\nu_n$ depends on $n$ and satisfies some additional assumptions, such as $\nu=\nu_n=o(d_n)$ as $n\to\infty$.
	
	\item The block Jacobi preconditioner $P_n^J$ considered in Theorem~\ref{blockdiag(GLT)=GLT} for a matrix $A_n$ belonging to a GLT sequence $\{A_n\}_n\sim_{\rm GLT}\kappa$ is made up of blocks $A_n^{(1)},\ldots,A_n^{(\nu)}$ that are diagonal submatrices of $A_n$. In general, these submatrices are not computationally easy, i.e., solving a linear system associated with them can be computationally expensive. A possible way to improve the efficiency of the preconditioner $P_n^J$ is to replace $A_n^{(1)},\ldots,A_n^{(\nu)}$ with computationally easier ``GLT blocks'' $\tilde A_n^{(1)},\ldots,\tilde A_n^{(\nu)}$ such that the new preconditioner $\tilde P_n^J=\tilde A_n^{(1)}\oplus\cdots\oplus\tilde A_n^{(\nu)}$ satisfies $\{\tilde P_n^J\}_n\sim_{\rm GLT}\tilde\kappa$ with either $\tilde\kappa\approx\kappa$ or, even better, $\tilde\kappa=\kappa$.
	To achieve the desired goal, the ``GLT blocks'' $\tilde A_n^{(1)},\ldots,\tilde A_n^{(\nu)}$ could be defined as appropriate products between diagonal sampling matrices and computationally easy Toeplitz-like matrices such as circulant matrices \cite{Davis} or $\tau$~matrices \cite{BC}.
	
	\item In view of possible multidimensional applications of block Jacobi/Gauss--Seidel preconditioning to GLT sequences, an interesting topic for future research is to extend Theorems~\ref{blockdiag(GLT)=GLT}--\ref{blocktril(GLT)=GLT} to multilevel GLT sequences, which arise, for instance, in the numerical discretization of partial differential equations \cite{rg,bgR,bgd,GLTbookII}.
	
	\item In the numerical discretization of differential problems by domain decomposition methods (DDMs), two popular iterative solution methods are the additive and multiplicative Schwarz methods; see \cite{DDM-2024,DDM-book}.
	What is relevant to us is that, roughly speaking, the additive Schwarz method is another name for the block Jacobi method and the multiplicative Schwarz method is another name for the block Gauss--Seidel method.
	Thus, DDMs are a research field where the results of the present paper, along with their multidimensional extensions mentioned in the previous item, could find application.
	More specifically, Theorems~\ref{blockdiag(GLT)=GLT}--\ref{blocktril(GLT)=GLT} and their multidimensional extensions could be used in combination with the basic theoretical tools provided in \cite[Section~3.2]{GLT-vs-Fourier} to analyze the additive and multiplicative Schwarz methods applied to GLT structures arising from DDM discretizations. A first contribution in this direction has recently appeared in \cite{Rifqui-DDM-1d}.
	
	\item A successful idea proposed by Pestana and Wathen \cite{PW} for solving a real non-symmetric Toeplitz linear system $T_n(f)\xx=\boldsymbol b$ consists in pre-multiplying both sides by the flip matrix \eqref{flipm} and solving the resulting real symmetric linear system $W_nT_n(f)\xx=W_n\boldsymbol b$ by either the minimal residual (MINRES) method or its preconditioned version.
	This idea, which we call ``flipping strategy'', has gained more and more popularity over time and it has now become a consolidated research area.
	What is relevant to us is that the spectral distribution of the sequence of system matrices $W_nT_n(f)$, which plays a significant role in the convergence analysis of MINRES, was established in \cite{SIMAX-hankel,BIT-hankel} by leveraging on the Fasino--Tilli theorem; see also the generalization to the multilevel case performed in \cite{ELA-multihankel,SIMAX-multihankel}, which again invokes (the multilevel version of) the Fasino--Tilli theorem.
	Moreover, in the spectral distribution analysis of block matrices with rectangular Toeplitz blocks---which arise in several applications as explained in \cite[Section~1]{blocking}---the Fasino--Tilli theorem is again a cornerstone; see \cite{blocking} for more details.
	Given the central role played by the Fasino--Tilli theorem in the asymptotic spectral analysis of Toeplitz-like matrices such as those mentioned above, we can predict that the same central role is played by the extended Fasino--Tilli theorem (Theorem~\ref{thm:FTextended}) in the asymptotic spectral analysis of ``GLT-like'' matrices. This observation opens the way to several future investigations on the possible applications of the extended Fasino--Tilli theorem, including in particular the spectral distribution analysis of: (a) flipped ``GLT matrices'' resulting from the application of the flipping strategy to ``GLT linear systems'' $A_n\xx=\boldsymbol b$ with $\{A_n\}_n\sim_{\rm GLT}\kappa$; (b) block matrices with rectangular ``GLT blocks''.
\end{itemize}

\section*{Acknowledgements}

{\footnotesize

The authors are members of the research group GNCS (Gruppo Nazionale per il Calcolo Scientifico) of INdAM (Istituto Nazionale di Alta Matematica).
This work was supported
by the INdAM-GNCS project PASTRAMI (sPline And Solver innovaTions foR Adaptive isogeoMetric analysIs, CUP E53C24001950001),
by the MUR excellence department project MatMod@TOV awarded to the Department of Mathematics of the University of Rome Tor Vergata (CUP E83C23000330006),
by the Department of Mathematics of the University of Rome Tor Vergata through the project METRO (Methods and modEls for arTificial neuRal netwOrks, CUP E83C25000630005),
by the Theory, Economics and Systems Laboratory (TESLAB) of the Department of Computer Science of the Athens University of Economics and Business,
by the PRIN-PNRR project MATHPROCULT (MATHematical tools for predictive maintenance and PROtection of CULTural heritage, Code P20228HZWR, CUP J53D23003780006),
and by the European High-Performance Computing Joint Undertaking (JU) under grant agreement No 955701. The JU receives support from the European Union's Horizon 2020 research and innovation programme and Belgium, France, Germany, Switzerland.}

\end{document}